\title[$\cat(0)$ groups with specified boundary]{CAT(0) groups with specified boundary}
\author{Kim Ruane}
\address{Department of Mathematics\\
Tufts University\\
Medford, MA 02155\\
USA}
\email{kim.ruane@tufts.edu}
\urladdr{}
\def\Fix{\mathrm{Fix}}
\def\Min{\mathrm{Min}}
\def\cat{{\rm CAT}}
\def\cnewtheorem#1[#2]#3{\newtheorem{#1}{#3}[section]
\expandafter\let\csname c@#1\endcsname\c@thm}
\newtheorem{thm}{Theorem}[section]
\theoremstyle{definition}
\newcommand{\bdry}{\partial_{\infty}}
\newcommand{\bdryT}{\partial_{{\rm Tits}}}
\renewcommand{\emptyset}{\varnothing}
\begin{document}

\begin{abstract}
We specify exactly which groups can act geometrically on CAT(0) spaces
whose visual boundary is homeomorphic to either a circle or a
suspension of a Cantor set.
\end{abstract}

\maketitle

\section{Introduction}

Suppose $X$ is a proper $\cat(0)$ space.  We would like to understand what 
can be said about the geometry of $X$ only knowing the homeomorphism type of 
the visual boundary.  For example, if you specify that $\partial_{\infty}X$ 
is homeomorphic to the circle, then what does this say about $X$?  $X$ 
could be the Euclidean plane, but $X$ could also be $\mathbb H^2$ or 
a Euclidean cone with cone angle greater than $2\pi$.  Such a space can be
obtained by gluing five (or more) quarter planes together along the boundary rays.  
If one further specifies that $X$ admits a geometric group action, then 
one can rule out this last possibility, but the other two possibilities 
remain.

There are two topologies on the boundary $\partial X$ that are used in this paper.  
In general, these boundaries are very different topological objects.  For example, the 
{\it visual} topology on the boundary of $X=\mathbb H^2$ 
gives  $\partial_{\infty}X\cong S^1$ while the Tits topology gives 
$\bdryT X$ an uncountable discrete space.  There is however, 
always a continuous map from $\partial_{{\rm Tits}} X$ to 
$\partial_{\infty} X$ given by the identity map (Bridson and Haeflier 
\cite{BridsonHaefliger}).   
If you know the homeomorphism type of the visual topology and some of the 
structure of the Tits boundary, then you can obtain information about $X$.  However,
it can be difficult to extract any information about the Tits topology only given 
information about the visual topology.  This article will explore the situation 
when the visual boundary is either a circle or a suspension of a Cantor set.  The results 
themselves are not very surprising, but the proof techniques are quite interesting and can 
hopefully be used to analyze other boundaries. 

The first result concerning the circle follows from the Flat Plane Theorem (\fullref{flatpl} here) and work of Gromov, Casson--Jungreis, Tukia, and Gabai.  This result is interesting in its own right but is also used as a crucial step in the proof of the more difficult \fullref{main1}.  Both results use the same method of proof so it is worth reading the easier one first.  

Recall that a $\cat(0)$ space $X$ has {\it local extendability of geodesics} if every geodesic segment in $X$ can be extended to a geodesic line in $X$. 

\medskip
{\bf \fullref{circle}}\qua {\sl Suppose $X$ is a $\cat(0)$ space with local extendability of 
geodesics and $G$ is a group acting geometrically on $X$.  Suppose the 
visual boundary of $X$ is homeomorphic to a circle. Then exactly one of 
the following is true.
\begin{enumerate}
\item $X$ is isometric to a Euclidean plane and $G$ is a Bieberbach group 
(virtually $\mathbb Z\oplus\mathbb Z$).  
\item $X$ is quasi-isometric to $\mathbb H^2$ and $G$ is topologically 
conjugate to a fuchsian group.
\end{enumerate}}

\medskip
The statement of the second result is along the same lines only we assume $\partial_{\infty} X$ is 
homeomorphic to $\Sigma C$ where $C$ denotes a Cantor set, however this does not follow from the work cited above.   The proof of this theorem requires a careful analysis of the interplay between the visual and Tits topologies and the group action.   

\medskip
{\bf \fullref{main1}}\qua {\sl Suppose $G$ acts geometrically on a $\cat(0)$ space $X$ and 
suppose $\partial_{\infty}X$ is homeomorphic to $\Sigma C$ where $C$ 
denotes the Cantor set.  Then, the following are true.
\begin{enumerate} 
\item $\partial_{{\rm Tits}} X$ is isometric to the suspension of an 
uncountable discrete space.
\item  There exists a closed, convex, $G$--invariant subset $X'$ of $X$ which splits as $Y\times\mathbb R$.  If $X$ has local extendability of geodesics, then $X=Y\times\mathbb R$.
\item The subspace $Y$ is $\cat(0)$ and has $\bdry Y$ homeomorphic to 
a Cantor set.
\end{enumerate}}
\medskip

And finally, we obtain information about the group $G$ in the previous theorem using 
an algebraic result proven by M. Bridson and relayed to the author here.  The result of Bridson 
is \fullref{algebra} here. 

\medskip
{\bf \fullref{main2}}\qua {\sl Suppose $G$ acts geometrically on a $\cat(0)$ space $X$ with $\bdry X$ homeomorphic to the suspension of a Cantor set.  Then $G$ contains a subgroup $G_0$ of finite index which is isomorphic to $F\times\mathbb Z$ where $F$ is a nonabelian free group.}

\medskip
As a final remark, we mention that a much more difficult question
would be to classify all $\cat(0)$ groups with Sierpinski carpet
boundary.  An example of a $\cat(0)$ group with this boundary is the
fundamental group of the figure eight knot complement (Ruane
\cite{RuaneTrun}).  This group has $\mathbb Z\oplus\mathbb Z$
subgroups so is not word hyperbolic.  However, if $M$ is a compact
hyperbolic three manifold with nonempty totally geodesic boundary then
$G=\pi_1(M)$ is a word hyperbolic group with Sierpinski carpet
boundary.  The following conjecture concerning word hyperbolic groups
with Sierpinski boundary was posed by Kapovich and Kleiner
\cite{KapovichKleiner}.

\medskip
\noindent
{\bf Conjecture}\qua \cite{KapovichKleiner}\qua  Let $G$ be a word hyperbolic group with Sierpinski carpet boundary.  Then $G$ acts discretely, cocompactly, and isometrically on a convex subset of $\mathbb H^3$ with nonempty totally geodesic boundary.

I would especially like to thank the referee for pointing out a gap in the original version of \fullref{key} and for patiently awaiting a corrected version.  I would also like to thank him/her for many other helpful suggestions and comments that have significantly improved this paper.  I would also like to thank Phil Bowers for his helpful discussions and words of encouragement.

\section{$\cat(0)$ preliminaries}

In this section, we assume $X$ is a proper, complete, geodesic metric 
space.  The $\cat(0)$ inequality is a curvature condition on $X$ first 
introduced by Alexandrov in \cite{Alexandrov}.  The idea is to compare geodesic 
triangles in $X$ to triangles of the same size in the Euclidean plane and 
require those in $X$ to be at least as thin as the corresponding triangle 
in $\mathbb E^2$.  The formal definition is given below.  Examples of $\cat(0)$ 
spaces include $\mathbb E^n$ and the universal covers of compact Riemannian 
manifolds of nonpositive curvature. 

\begin{defn}[$\cat(0)$]  Let $(X,d)$ be a proper complete geodesic
metric space. If $\vartriangle abc$ is a geodesic triangle in $X$, 
then we consider $\vartriangle\overline a\overline b\overline c$ in 
$\mathbb E^2$, a triangle with the same side lengths, and call this a 
{\it comparison triangle}.  Then we say $X$ satisfies the $\cat(0)$ 
{\it inequality} if given $\vartriangle abc$ in $X$, then for any 
comparison triangle and any two points $p,q$ on $\vartriangle abc$, the
corresponding points $\overline p,\overline q$ on the comparison 
triangle satisfy $$d(p,q)\leq d(\overline p,\overline q)$$
\end{defn}

\begin{defn}[Asymptotic] Let $X$ be a metric space.  Two geodesic rays\break
$c,c'\co [0,\infty)\to X$ are called {\it asymptotic} if there exists a 
constant $K$ such that\break $d(c(t),c'(t))\le K$ for all $t\ge 0$.  
\end{defn}

Let $(X,d)$ be a $\cat(0)$ space. First, define the boundary, $\partial 
X$ as a point set as follows:

\begin{defn}[Boundary] The {\it boundary} of $X$, denoted $\partial X$, is the set of
equivalence classes of geodesic rays where two rays are equivalent if and 
only if they are asymptotic.  The union $X\cup\partial X$ is denoted by 
$\wwbar X$. 
\end{defn}

\begin{remark} Fix $x_0\in X$ and consider the set $\partial_{x_0} X$ of
geodesic rays in $X$ that begin at $x_0$.  Fixing a basepoint essentially 
picks out a unique representative of an equivalence class of geodesic rays. 
More precisely, by \cite[Proposition 8.2]{BridsonHaefliger}, given a geodesic ray 
$c\co [0,\infty)\to X$ with $c(0)=x_0$ and any other $x\in X$, there is a unique 
geodesic ray $c'\co [0,\infty)\to X$ issuing from $x$ and is asymptotic to $c$.  
Thus one can identify $\partial X$ with $\partial_{x_0} X$ for a given basepoint.
We describe a topology on $\partial_{x_0} X$ below and it follows that the obvious 
bijection $\partial_{x_0} X\to\partial_{x_1} X$ for any other $x_1\in X$ is a
homeomorphism.  Thus we often write $\partial X$ even though we may be 
considering a fixed basepoint.  
\end{remark}

There is a natural neighborhood basis for a point in $\wwbar
X$.  Let $c$ be a geodesic ray emanating from $x_0$ and $r>0,\  \epsilon
>0$. Also, let $S(x_0,r)$ denote the sphere of radius $r$ centered
at $x_0$ with $p_r\co X \to S(x_0,r)$ denoting projection.  Define
$$U(c,r,\epsilon)=\{ x\in\wwbar{X} \vert d(x,x_0)>r,\ d(p_r(x),c(r))<
\epsilon\}$$  
This consists of all points in $\wwbar{X}$ such that when projected
back to $S(x_0,r)$, this projection is not more than $\epsilon$ away from
the intersection of that sphere with $c$.  These sets along with the
metric balls about $x_0$ form a basis for the {\it cone topology} on $\wwbar X$. The 
set $\partial X$ with the subspace topology is often called the 
{\it visual boundary}.  As one expects, the visual boundary of 
$\mathbb E^n$ is $S^{n-1}$ as is the visual boundary of $\mathbb H^n$.  
Thus the visual boundary does not capture the difference between these 
two $\cat(0)$ spaces.  Thus we need the Tits topology to distinguish 
these  types of spaces.   The notation $\partial X$ is used to denote the
visual topology and $\bdryT X$ to denote the Tits boundary.

We first develop a general technique for measuring
angles between points in $\partial X$.  We assume the reader has some
knowledge of how to measure angles in a metric space, but we add the
necessary definitions for completeness.  Alexandrov used the method of
comparison triangles to define the notion of angle between two geodesics
leaving a point $x_0$ in a metric space $X$, \cite{BridsonHaefliger}.  We recall that
definition here. 

\begin{defn}[Angles] Let $c\co [0,a]\to X$ and $c^\prime\co [0,a^\prime]\to
X$ be two geodesics with $c(0)=c^\prime (0)=x_0$.  Given $t\in [0,a],
t^\prime\in [0,a^\prime]$, and let $\alpha^{t,t^\prime}_{c,c^\prime}$
denote the angle in a comparison triangle in Euclidean space at the
vertex corresponding to $x_0$.  The (upper) angle between $c,c^\prime$ at
$x_0$ is defined to be the following number:
$$\angle_{c,c^\prime} := \limsup_{t,t^\prime\to 0}
\alpha^{t,t^\prime}_{c,c^\prime}$$
\end{defn}

Note: The $\limsup$ is used because the limit may not always exist, but in
$\cat(0)$ spaces, the limit does exist and instead of calling it an
``upper'' angle, we call it the angle.  For a proof of this, see \cite{BridsonHaefliger}.

\begin{defn}[Angle metric]\label{angle}  Let $X$ be a $\cat(0)$ space.  Given $x\in X$
and $u,v\in\partial X$, we denote by $\angle_x(u,v)$ the angle between
the unique geodesic rays which issue from $x$ ending at 
$u$ and $v$ respectively.  Then we define the angle between $u$ and
$v$ to be $$\angle(u,v)= \sup_{x\in X}\angle_x(u,v)$$
\end{defn}

One can verify this angle satisfies the triangle inequality
and so $\angle (u,u^\prime)$ defines a metric on $\partial X$ called the 
{\it angular metric}.  

\begin{example} Consider the $\cat(0)$ space $X=\mathbb H^2$.  We know
$\partial X$ can be identified with the unit circle in the disk model.  For any two 
points $p,q$ on the boundary circle, there is a geodesic line in $\mathbb H^2$ joining $p$ and $q$.   Clearly the supremum is attained by taking any other point on the line between $p$ and $q$ so $\angle (p,q)=\pi$.  This means the angle metric on $\partial(\mathbb H^2)$ is discrete.
\end{example}

\begin{defn}[Tits metric] The {\it Tits metric} on $\partial X$, denoted $Td$, is the {\it length}
metric associated to the angular metric.  Thus for $v,w\in\partial X$,
$Td(v,w)$ is defined to be the infimum of the lengths of rectifiable
curves in the angular metric between $v$ and $w$.  If there are no
rectifiable curves joining them, then $Td(v,w)=\infty$.  
\end{defn}

From the example above, we can conclude that for any two points $u,v\in\partial\mathbb H^2$, $Td(u,v)=\infty$.  The basic facts needed here concerning the Tits metric can be found in Ballmann--Gromov--Schroeder
\cite{BGS} for the manifold setting  and in \cite{BridsonHaefliger} for the $\cat(0)$ setting.  A $\cat(1)$ space is defined similarly to a $\cat(0)$ space except the comparison triangles live in the standard $S^2$ rather than $\mathbb E^2$ and the inequality must hold for all triangles with perimeter less than $2\pi$.   Note, that this perimeter restriction guarantees the existence of a comparison triangle in $S^2$.  

\begin{remark}\label{tits} The main facts concerning $\bdryT X$ needed here are as follows.
\begin{enumerate}
\item The Tits boundary of a flat plane in $X$ is a circle that is 
isometrically embedded in $\bdryT X$
\item  If $u,v\in\partial X$ have $Td(u,v)=\infty$, then there is a 
geodesic line in $X$ joining $u$ and $v$ and this line does not bound a flat half 
plane.  
\item $\bdryT X$ is a $\cat(1)$ space.  In particular, if $p,q\in\bdryT X$ have $Td(p,q)<\pi$, then 
there is a unique geodesic between them in $\bdryT X$ and $Td(p,q)=\angle(p,q)$.  

\end{enumerate}
\end{remark}

\begin{remark}  It follows that if $X$ and $Y$ are two $\cat(0)$ spaces, 
then with the product metric, $X\times Y$ is also $\cat(0)$ and 
$\partial_{\infty}(X\times Y)\cong\partial_{\infty} X\star\partial_{\infty} Y$ 
where $\star$ denotes the  spherical join.  The Tits topology is 
isometric to the join of the Tits topologies of the factors.
In this paper the case in which one of the factors is $\mathbb 
R$ is used so we point out the following:  
$\partial(Y\times\mathbb R)\equiv\Sigma(\partial Y)$ where $\Sigma$ denotes 
suspension. 
\end{remark}

We will also need the following facts concerning lines and planes in a proper $\cat(0)$ 
space $X$.  Again, proofs can be found in \cite{BridsonHaefliger}.  

\begin{defn}[Parallel]\label{parallel} Suppose $c,c'\co\mathbb R\to X$ are geodesic lines in $X$.  Then we say $c,c'$ are {\it parallel} in $X$ if there exists a constant $K\ge 0$ with $d(c(t),c'(t))\le K$ for all $t\in\mathbb R$. 
\end{defn}

\begin{thm}\label{lines} Suppose $c\co\mathbb R\to X$ is a geodesic line.  Let $\mathcal{P}(c)$ denote the set of all geodesic lines in $X$ that are parallel to $c$.  Then $\mathcal{P}(c)$ is a closed convex subset of $X$ which splits isometrically as $\mathcal{P}(c)\equiv Y\times\mathbb R$ where the $\mathbb R$ factor is determined by $c$. $Y$ is also a closed, convex subset of $X$.  
\end{thm}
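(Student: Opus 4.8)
The plan is to deduce everything from the Flat Strip Theorem \cite[Theorem II.2.13]{BridsonHaefliger}: any two parallel geodesic lines in a CAT(0) space are the two boundary lines of an isometrically embedded flat strip $\mathbb R\times[0,a]$, with $a$ the distance between the lines. Before invoking it I would record two elementary consequences of the convexity of the metric: (i) since $t\mapsto d(\gamma(t),\gamma'(t))$ is convex for geodesics $\gamma,\gamma'$, and a convex function on $\mathbb R$ bounded above is constant, two lines are parallel precisely when they share the same pair of endpoints in $\partial X$ --- so parallelism is transitive; and (ii) there is at most one line parallel to $c$ through a given point, since two such would be the boundary lines of a flat strip through a common point, forcing $a=0$.

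Next I would realize $\mathcal P(c)$ as a sublevel set. Let $b,b'$ be the Busemann functions of the opposite rays $t\mapsto c(t)$ and $t\mapsto c(-t)$, normalized by $b(c(t))=-t$ and $b'(c(t))=t$; both are convex and $1$--Lipschitz, $b+b'$ is convex and continuous, and the triangle inequality gives $b+b'\ge 0$ everywhere with equality on $c$. I claim $\mathcal P(c)=\{\,x\mid b(x)+b'(x)\le 0\,\}$. Along any line parallel to $c$ the function $b+b'$ is bounded and convex, hence constant, hence $0$. Conversely, if $b(x)+b'(x)=0$ then the unique rays from $x$ asymptotic to $c(+\infty)$ and to $c(-\infty)$ (\cite[Proposition 8.2]{BridsonHaefliger}) concatenate to a geodesic line: along the first, $b$ decreases at unit rate, which together with $b+b'\ge0$ and the $1$--Lipschitz bound forces $b'$ to increase at unit rate along it, and then $d(p,q)\ge b'(p)-b'(q)$ shows the concatenation is a geodesic; it has the same endpoints as $c$, so $x\in\mathcal P(c)$. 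This description makes $\mathcal P(c)$ closed and convex at once, as the sublevel set of a continuous convex function.

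For the product structure I would set $\rho=-b$ and $Y=b^{-1}(0)\cap\mathcal P(c)$. The crux is that $b$ restricts to an \emph{affine} function on every flat strip $S$ whose boundary lines are parallel to $c$ (such an $S$ is contained in $\mathcal P(c)$, since it is swept out by lines parallel to $c$): both $b|_S$ and $b'|_S$ are convex and $b+b'\equiv 0$ on $S$, so $b|_S=-b'|_S$ is simultaneously convex and concave. In the Euclidean model $S\cong\mathbb R\times[0,a]$ this forces $b(j(s,u))=-s+\phi(u)$ with $\phi$ affine, and a short computation (of the type $\sqrt{(s'-s)^2+a^2}-s'\to 0$) shows $b$ takes the same value at the two endpoints of each cross--segment $\{s\}\times[0,a]$, whence $\phi$ is constant; so $b$ is constant on the segments of $S$ orthogonal to the $\mathbb R$--direction, and the level sets of $b|_S$ are exactly those segments. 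Consequently $\rho$ restricts to an orientation--preserving isometry onto $\mathbb R$ on each parallel line, $Y\ni c(0)$ is closed (an intersection of closed sets) and convex (affineness of $b$ keeps $b\equiv 0$ on geodesics joining points of $Y$), and each parallel line meets $Y$ in exactly one point. Define $\Phi\co\mathcal P(c)\to Y\times\mathbb R$ by $\Phi(x)=(\pi(x),\rho(x))$, with $\pi(x)$ that intersection point; uniqueness of the parallel line through a point and injectivity of $\rho$ on each of them make $\Phi$ a bijection. To see it is an isometry, given $x,x'$ pass to the flat strip $S$ between their parallel lines, so $[x,x']\subseteq S$; in the Euclidean model, $\pi(x)$ and $\pi(x')$ are the two points where the level set $b^{-1}(0)\cap S$ meets the boundary lines of $S$, so they are the endpoints of one perpendicular cross--segment of length $a$ (which realizes $d_Y$), while $|\rho(x)-\rho(x')|$ is the displacement along $S$; Pythagoras then gives $d(x,x')^2=d_Y(\pi(x),\pi(x'))^2+(\rho(x)-\rho(x'))^2$. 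This is the desired isometry $\mathcal P(c)\equiv Y\times\mathbb R$, with the $\mathbb R$--factor the direction of $c$ and $Y$ a closed convex --- hence CAT(0) --- subset of $X$.

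Granting the Flat Strip Theorem, the step I expect to demand the most care is the choice of cross--section together with the verification that the metric splits with \emph{no} cross term: a cross--section merely transverse to the parallel lines does not give a metric product, and one is forced to use the Busemann level set $b^{-1}(0)$ (equivalently, the nearest--point fibre over $c(0)$), whose intersection with each flat strip is orthogonal to the line direction. Establishing that orthogonality --- namely that $b$ is affine on the relevant flats --- is the real content; everything else is routine bookkeeping with convexity of the metric and of Busemann functions. (If the Flat Strip Theorem were not available to cite, proving it via the flat quadrilateral lemma would be the principal additional task.)
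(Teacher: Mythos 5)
Your proposal is correct, but note that the paper itself does not prove this statement at all: it is quoted from Bridson--Haefliger (Theorem II.2.14 there), with the remark ``proofs can be found in \cite{BridsonHaefliger}.'' So the relevant comparison is with the Bridson--Haefliger argument, which, like yours, rests on the Flat Strip Theorem, but builds the splitting differently: there $\mathcal{P}(c)$ is shown convex by strip arguments directly, and the cross-section is taken to be $p^{-1}(c(0))$ for $p$ the orthogonal projection of $\mathcal{P}(c)$ onto the complete convex set $c(\mathbb R)$, with the product decomposition coming from properties of that projection. Your route instead characterizes $\mathcal{P}(c)$ as the zero set of the convex function $b+b'$ (sum of the two Busemann functions of $c$) and takes $Y=b^{-1}(0)\cap\mathcal{P}(c)$; this is the Busemann-function approach one finds in Ballmann--Gromov--Schroeder, and it buys you closedness and convexity of $\mathcal{P}(c)$ in one stroke, at the cost of needing the standard facts about Busemann functions (convexity, $1$--Lipschitz, unit-speed decay along asymptotic rays) and the affineness argument on strips; the two cross-sections in fact coincide. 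One small point to tighten: your first justification that $b+b'\equiv 0$ on a line parallel to $c$ says ``bounded and convex, hence constant, hence $0$'' --- constancy alone does not force the value $0$; the value is $0$ because the convex hull of $c$ and the parallel line is a flat strip containing $c$, on which the explicit computation $b(j(s,u))=-s$, $b'(j(s,u))=s$ (exactly the $\sqrt{(s'-s)^2+a^2}-s'\to 0$ computation you carry out later for the cross-segments) gives $b+b'=0$. With that rearrangement the argument is complete and avoids any circularity, since the strip in question contains $c$ itself and needs no prior knowledge of $\mathcal{P}(c)$.
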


The next result is analogous to the previous result but for any closed
convex subset $C$ of $X$ instead of a line.  This is often referred to
the Sandwich Lemma, \cite[page 182]{BridsonHaefliger}.  Write
$d_C(x)=\inf \{d(x,c)\ |\ c\in C\}$ to denote the distance from a
point $x\in X$ to the set $C$.

\begin{thm}[Sandwich Lemma]\label{flats} Let $C_1,C_2$ be closed convex subspaces of a $\cat(0)$ space $X$.  If the restriction of $d_{C_1}$ to $C_2$ is constant and equal to some number $a$, and the restriction of $d_{C_2}$ to $C_1$ is constant, then the convex hull of $C_1\cup C_2$ is isometric to $C_1\times [0,a]$.  
\end{thm}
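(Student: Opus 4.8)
The plan is to build the asserted isometry out of the nearest-point projections between $C_1$ and $C_2$, by showing that the region they sandwich is swept out by a coherent family of flat Euclidean rectangles. Since $X$ is a complete $\cat(0)$ space and $C_1,C_2$ are closed and convex, there are well-defined nearest-point projections $\pi_1\co X\to C_1$ and $\pi_2\co X\to C_2$; each is $1$--Lipschitz, and whenever $z\notin C_i$ one has $\angle_{\pi_i(z)}(z,c)\ge\pi/2$ for every $c\in C_i$ (see \cite{BridsonHaefliger}). Let $b$ be the constant value of $d_{C_2}$ on $C_1$. Projecting a point $x'\in C_2$ into $C_1$ produces a point $x$ with $d(x,x')=d_{C_1}(x')=a$, so $b=d_{C_2}(x)\le a$; projecting a point of $C_1$ into $C_2$ gives $a\le b$ symmetrically, hence $b=a$. (If $a=0$ then $C_1=C_2$ and there is nothing to prove, so assume $a>0$.) Moreover, for $x\in C_1$ the point $x'=\pi_2(x)$ satisfies $d(x',x)=a=d_{C_1}(x')$, so uniqueness of nearest points forces $\pi_1(x')=x$; symmetrically $\pi_2\pi_1=\mathrm{id}$ on $C_2$. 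Thus $\pi_2\co C_1\to C_2$ is a bijection with inverse $\pi_1$, and being mutually inverse $1$--Lipschitz maps, it is an isometry.

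The geometric core is to show, for any two distinct $x,y\in C_1$ with images $x'=\pi_2(x)$ and $y'=\pi_2(y)$, that the convex hull of $\{x,y,x',y'\}$ is an isometrically embedded Euclidean rectangle. Consider the geodesic quadrilateral $x,y,y',x'$: its edge $[x,y]$ lies in $C_1$ and its edge $[x',y']$ lies in $C_2$ by convexity, and $d(x,x')=d(y,y')=a$. The orthogonality property of $\pi_2$ at $x'$ and $y'$ yields $\angle_{x'}(x,y')\ge\pi/2$ and $\angle_{y'}(y,x')\ge\pi/2$, while that of $\pi_1$ at $x=\pi_1(x')$ and $y=\pi_1(y')$ yields $\angle_x(x',y)\ge\pi/2$ and $\angle_y(y',x)\ge\pi/2$. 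Hence the interior angle sum is at least $2\pi$, so by the Flat Quadrilateral Theorem (\cite{BridsonHaefliger}) it equals $2\pi$ and the convex hull is isometric to the comparison quadrilateral in $\mathbb{E}^2$. Since each angle is $\ge\pi/2$ and the four sum to $2\pi$, all are right angles, and a Euclidean quadrilateral with four right angles and two opposite sides of length $a$ is the rectangle $[0,d(x,y)]\times[0,a]$; call this embedded rectangle $R(x,y)$. In particular this re-proves $d(x',y')=d(x,y)$.

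Now I would assemble these rectangles. For $x\in C_1$ let $c_x\co[0,a]\to X$ be the unit-speed geodesic from $x$ to $\pi_2(x)$, and define $\sigma\co C_1\times[0,a]\to X$ by $\sigma(x,t)=c_x(t)$. If $z$ lies on the bottom edge $[x,y]$ of $R(x,y)$, then the point of the top edge $[x',y']\subset C_2$ directly above it is at distance $a$ from $z$, hence equals $\pi_2(z)$ by uniqueness; so the vertical fibres of $R(x,y)$ are exactly the geodesics $c_z$ for $z\in[x,y]$, and $R(x,y)=\sigma([x,y]\times[0,a])$. Reading off distances inside the various $R(x,y)$ gives $d(\sigma(x,s),\sigma(y,t))=\sqrt{d(x,y)^2+(s-t)^2}$ for all $x,y\in C_1$ and $s,t\in[0,a]$, which is precisely the product metric on $C_1\times[0,a]$, so $\sigma$ is an isometric embedding. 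Its image contains $C_1$ (at $t=0$) and $C_2$ (at $t=a$, since $\pi_2$ is onto), is contained in the convex hull of $C_1\cup C_2$, and is convex because any geodesic joining $\sigma(x,s)$ to $\sigma(y,t)$ stays inside $R(x,y)$. Therefore the image of $\sigma$ equals the convex hull of $C_1\cup C_2$, and $\sigma$ is the required isometry onto $C_1\times[0,a]$.

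I expect the main obstacle to be the flat-rectangle step: turning the four right-angle inequalities into genuine metric flatness requires the Flat Quadrilateral Theorem (or an equivalent flat-strip argument), and one must then verify that the separately produced rectangles $R(x,y)$ are mutually compatible — concretely, that the vertical fibre of $R(x,y)$ over a point $z\in[x,y]$ is the projection geodesic $c_z$ — so that they glue into a single isometric embedding. Everything else is routine bookkeeping with the nearest-point projections.
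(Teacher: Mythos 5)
Your proof is correct. Note that the paper itself does not prove this statement at all: it quotes the Sandwich Lemma from Bridson--Haefliger \cite[page 182]{BridsonHaefliger} (where it appears as an exercise) and only uses it later, so there is no in-paper argument to compare against. Your argument is the standard intended one: the hypotheses force the two constants to coincide, the nearest-point projections $\pi_1,\pi_2$ restrict to mutually inverse $1$--Lipschitz maps (hence an isometry $C_1\to C_2$), the $\ge\pi/2$ angle property of projections gives angle sum $\ge 2\pi$ in each quadrilateral $x,y,y',x'$, and the Flat Quadrilateral Theorem \cite[Theorem II.2.11]{BridsonHaefliger} then yields the flat rectangles that you glue via the fibre map $\sigma(x,t)=c_x(t)$. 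You also handle the points that are easy to overlook: the degenerate case $a=0$, surjectivity of $\pi_2|_{C_1}$, the identification of the vertical fibres of $R(x,y)$ with the projection geodesics $c_z$ (which is what makes the rectangles compatible), and the two inclusions showing the image of $\sigma$ equals the convex hull. The only implicit ingredients worth flagging are that existence and uniqueness of the projections use completeness of $X$ and closedness of the $C_i$ (part of the paper's standing assumptions), and that distances between points of $R(x,y)$ may be read off in the Euclidean model because the convex hull of the four vertices is a convex, hence totally geodesic, subset. These are routine, so the proof stands as a complete substitute for the citation.
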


We now state the version of the Flat Plane Theorem used in this paper.  
This version is stated  as \cite[Theorem 3.1, page 459 ]{BridsonHaefliger}. 

\begin{thm}[Flat Plane Theorem]\label{flatpl} If a group $\Gamma$ acts properly and cocompactly by isometries on a $\cat(0)$ space $X$, then $\Gamma$ is word hyperbolic if 
and only if $X$ does not contain an isometrically embedded copy of the 
Euclidean plane.  
\end{thm}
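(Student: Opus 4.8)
\textit{Proof sketch.}\quad The statement is a biconditional, and the hypothesis on $\Gamma$ enters only through the Milnor--\v{S}varc lemma: since $\Gamma$ acts geometrically on the proper geodesic space $X$ it is finitely generated, and with any word metric it is quasi-isometric to $X$. Because Gromov hyperbolicity is a quasi-isometry invariant of geodesic spaces, ``$\Gamma$ is word hyperbolic'' may be replaced throughout by ``$X$ is Gromov hyperbolic,'' and it suffices to show that a $\cat(0)$ space $X$ carrying a geometric group action is Gromov hyperbolic if and only if it contains no isometrically embedded copy of $\E^2$.

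One direction is immediate. If $f\colon \E^2 \inclusion X$ is an isometric embedding, then $f(\E^2)$ is convex in $X$: for $x,y \in f(\E^2)$, the $f$--image of the Euclidean segment between $f^{-1}(x)$ and $f^{-1}(y)$ is a path in $X$ of length $d(x,y)$, hence is the unique geodesic $[x,y]$. Thus every geodesic triangle of this flat plane is a geodesic triangle of $X$, and since large Euclidean triangles fail to be $\delta$--thin for each fixed $\delta$, the space $X$ is not Gromov hyperbolic.

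For the converse I argue by contraposition. Assume $X$ is not Gromov hyperbolic; I must produce a flat plane. Non-hyperbolicity of the geodesic space $X$ furnishes, for every $n$, a geodesic triangle $\triangle(a_n,b_n,c_n)$ and a point $p_n$ on $[a_n,b_n]$ with $d\bigl(p_n,[a_n,c_n]\cup[c_n,b_n]\bigr)\ge n$; in particular $d(p_n,a_n),d(p_n,b_n),d(p_n,c_n)\ge n$. The plan has three steps. \textbf{(i)} \emph{Renormalise and pass to a limit:} fix a compact set $K$ with $\Gamma K=X$ and elements $\gamma_n\in\Gamma$ with $\gamma_n p_n\in K$, and replace each triangle by its $\gamma_n$--translate, so that $p_n\in K$. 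Since $X$ is proper, Arzel\`a--Ascoli (applied to the geodesic segments $[p_n,a_n],[p_n,b_n],[p_n,c_n]$ truncated to initial subsegments of each fixed length, with a diagonal argument over the lengths) lets me pass to a subsequence along which $p_n\to p$ and these segments converge, uniformly on compacta, to geodesic rays from $p$; since $p_n$ is interior to $[a_n,b_n]$ and $d(p_n,a_n),d(p_n,b_n)\to\infty$, the geodesics $[a_n,b_n]$ themselves converge to a geodesic line $\ell$ through $p$. \textbf{(ii)} \emph{Extract flat pieces:} using the fatness together with the $\cat(0)$ comparison inequality, show that this limiting configuration contains isometrically embedded Euclidean disks $D(x_k,R_k)\subset X$ of radius $R_k$ centred at $x_k$ with $R_k\to\infty$. \textbf{(iii)} \emph{Pass to a flat plane:} given such disks, translate so that $x_k$ stays in $K$ and apply Arzel\`a--Ascoli once more to the distance--preserving maps $D_{\E^2}(0,R_k)\to X$ to obtain an isometric embedding $\E^2\inclusion X$.

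Steps (i) and (iii) are soft limiting arguments; the content, and the step I expect to be the main obstacle, is \textbf{(ii)}: upgrading the one-sided ``fat triangle'' information to honest metric flatness. A fat geodesic triangle in a $\cat(0)$ space need not be flat, since the comparison inequality only bounds distances from above; one sees directly only that the Euclidean comparison triangles $\triangle(\overline a_n,\overline b_n,\overline c_n)$ are at least as fat as the originals and so contain round disks whose radii are comparable to $n$. To force flatness in the limit one must show that, after renormalising by the cocompact action as in (i), the relevant comparison inequalities degenerate to \emph{equalities}: the limiting geodesic quadrilaterals read off from the fat triangles have angle sums equal to the Euclidean value $2\pi$, so by the flat quadrilateral theorem --- equivalently, by a suitable application of the splitting results \fullref{lines} and \fullref{flats} to convex subsets that turn out to lie at constant distance apart --- they bound isometrically embedded flat convex regions whose inradii tend to infinity, which in turn contain arbitrarily large flat disks. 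Carrying this ``limiting to flatness'' argument out carefully --- in particular, ensuring that the flat pieces produced are genuinely isometrically embedded and that their sizes blow up --- is the heart of the proof. (An alternative route to this step is via asymptotic cones, using that non-hyperbolicity of $X$ makes some asymptotic cone of $X$ a homogeneous $\cat(0)$ space which is not an $\mathbb{R}$--tree.)
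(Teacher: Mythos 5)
You have correctly reduced the statement via Milnor--\v{S}varc to a statement about $X$ alone, and your easy direction (a flat plane is convex, so arbitrarily large Euclidean triangles are genuine geodesic triangles of $X$, killing $\delta$--hyperbolicity) is complete. But the converse, which is the entire content of the theorem, is not proved: your step \textbf{(ii)} is exactly the assertion of the Flat Plane Theorem, and the mechanism you gesture at does not work as stated. Fatness of a geodesic triangle in a $\cat(0)$ space gives only \emph{lower} bounds on distances, while the comparison inequality gives \emph{upper} bounds, so nothing forces the comparison inequalities to ``degenerate to equalities'' along the limit you construct in (i). Indeed, without using the cocompact action beyond the single renormalisation $\gamma_n p_n\in K$, the implication is false: gluing hyperbolic planes of curvatures $-1/n$ along a basepoint yields a proper $\cat(0)$ space with arbitrarily fat triangles that contains no flat disk of any positive radius, so no purely local ``fat implies asymptotically flat'' argument can succeed. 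The known proofs (see \cite[Theorem 3.1, page 459]{BridsonHaefliger}, to which the present paper simply appeals without proof --- no argument for this theorem is expected here) must reuse the cocompactness at every scale: one first extracts from the fat triangles a geodesic line, then uses the group action together with the Flat Strip/Flat Quadrilateral theorems (or the divergence characterisation of hyperbolicity, or an asymptotic-cone argument) to produce a flat strip, a flat half-plane, and finally a flat plane. As written, your proposal proves the easy implication and the soft limiting steps (i) and (iii), but leaves the decisive step as an acknowledged obstacle, so it does not constitute a proof of the theorem.
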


\begin{remark} In the case that $\Gamma$ is word hyperbolic, we have 
$\Gamma$ is quasi-isometric to $X$ as metric spaces.  This forces $X$ to 
be $\delta$--hyperbolic as well since hyperbolicity is preserved under 
quasi-isometries.
\end{remark}

We will also need the following result concerning free abelian central 
subgroups.  This can be found as \cite[Theorem 6.12, page 234]{BridsonHaefliger}.  

\begin{thm}\label{central} Let $X$ be a $\cat(0)$ space and let $\Gamma$ be a finitely 
generated group acting by isometries on $X$.  Suppose $\Gamma$ contains a 
central subgroup $A\cong\mathbb Z^n$ that acts faithfully by hyperbolic 
isometries (apart from the identity), then there exists a subgroup of 
finite index $H\subset\Gamma$ which contains $A$ as a direct factor.
\end{thm}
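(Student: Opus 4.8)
The plan is to feed the central $\mathbb Z^n$ into the Flat Torus Theorem to produce a $\Gamma$--invariant Euclidean factor, observe that centrality forces $\Gamma$ to act on that factor purely by translations, and then finish with elementary group theory about finitely generated free abelian groups. In detail: since every nontrivial element of $A\cong\mathbb Z^n$ is a hyperbolic, hence semi-simple, isometry, the Flat Torus Theorem (\cite{BridsonHaefliger}; one may also iterate \fullref{lines}) shows that $\Min(A):=\bigcap_{a\in A}\Min(a)$ is nonempty, closed and convex, and splits isometrically as $Y\times\E^n$, with $A$ acting trivially on $Y$ and as a cocompact lattice of translations on $\E^n$. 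Because $A$ is central, every $\gamma\in\Gamma$ normalises $A$, and the Flat Torus Theorem also guarantees that such a $\gamma$ preserves $\Min(A)$ and respects the product splitting. Hence $\Gamma$ acts on $Y\times\E^n$ coordinatewise, and we write $\rho_E\co\Gamma\to\Isom(\E^n)$ for the second component.

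\emph{The action on $\E^n$ is by translations.} Write an isometry of $\E^n$ as a translation followed by its linear part in $O(n)$. For $\gamma\in\Gamma$ and $a\in A$, centrality gives $\gamma a\gamma^{-1}=a$, so $\rho_E(\gamma)$ commutes with the translation $\rho_E(a)$; since conjugating the translation by a vector $v$ produces the translation by the image of $v$ under the linear part, the linear part of $\rho_E(\gamma)$ must fix $v$. As $a$ ranges over $A$ these vectors span $\E^n$ (the lattice has full rank), so $\rho_E(\gamma)$ has trivial linear part. Therefore $\tau\co\Gamma\to\E^n$, sending $\gamma$ to the translation vector of $\rho_E(\gamma)$, is a homomorphism into an abelian group, and it is injective on $A$ because $A$ acts faithfully on the flat $\E^n$.

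\emph{Group theory.} As $\E^n$ is abelian, $[\Gamma,\Gamma]\subseteq\ker\tau$; hence $A\cap[\Gamma,\Gamma]=1$ and $A$ injects into $\Gamma^{\mathrm{ab}}$, and being torsion-free it injects into $Q:=\Gamma^{\mathrm{ab}}/(\text{torsion})\cong\mathbb Z^r$. Let $\bar A\cong\mathbb Z^n$ be the image of $A$ in $Q$, let $\widehat A=\set{x\in Q}{kx\in\bar A\text{ for some }k\ge1}$ be its saturation, and write $Q=\widehat A\oplus B$ for some free abelian $B$; then $\bar A$ has finite index in $\widehat A$, so $\bar A\oplus B$ has finite index in $Q$. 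Let $\Gamma_0\subseteq\Gamma$ be the preimage of $\bar A\oplus B$ (finite index in $\Gamma$, and containing $A$), let $\phi\co\Gamma_0\to\bar A\oplus B$ be the quotient map, $p\co\bar A\oplus B\to\bar A$ the projection, and set $H:=\ker(p\of\phi)$. Since $\phi$ restricts to an isomorphism $A\to\bar A$, a short diagram chase gives $\Gamma_0=A\cdot H$ with $A\cap H=1$; and as $A$ is central it commutes with $H$, so $\Gamma_0=A\times H$ is the desired finite-index subgroup.

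\emph{Where the difficulty lies.} The real content is in the first two steps. From the Flat Torus Theorem one needs not merely the splitting $\Min(A)=Y\times\E^n$ but its rigidity: every isometry normalising $A$ must preserve $\Min(A)$ and the decomposition. The one genuinely new point is that centrality of $A$ then forces the $\Gamma$--action on the flat $\E^n$ to have trivial rotational part, so that $\tau(\Gamma)$ is abelian; without this, $\rho_E$ would only take values in $\Isom(\E^n)$ and the group-theoretic step would be unavailable. One should note that $\tau(\Gamma)$ need not be discrete, so $\Min(A)/\Gamma$ need not be compact; passing to $\Gamma^{\mathrm{ab}}$ circumvents this and leaves only routine manipulation of finitely generated free abelian groups.
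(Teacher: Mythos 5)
The paper does not actually prove this statement: it is quoted verbatim from Bridson--Haefliger (Theorem II.6.12, page 234), so your proposal can only be measured against the stated hypotheses and the cited proof. Measured that way, there is a genuine gap: you invoke the Flat Torus Theorem, but that theorem requires $A\cong\mathbb Z^n$ to act \emph{properly} by semi-simple isometries, whereas the hypothesis here is only that $A$ acts \emph{faithfully} by hyperbolic isometries. For $n\ge 2$ faithfulness does not imply properness: take $A=\mathbb Z^2$ acting on $X=\E^2$ by the translations $(1,0)$ and $(\sqrt2,0)$. Every nontrivial element is hyperbolic and the action is faithful, yet $\Min(A)=\E^2$ contains no Euclidean factor on which $A$ acts as a cocompact lattice, and the translation vectors of $A$ span only a line. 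Consequently the two claims your argument leans on --- ``$A$ acts as a cocompact lattice of translations on $\E^n$'' and ``these vectors span $\E^n$, so $\rho_E(\gamma)$ has trivial linear part'' --- are simply unavailable; indeed, enlarging $\Gamma$ in that example by the reflection $(x,y)\mapsto(x,-y)$, which centralizes $A$, gives a finitely generated $\Gamma$ with central $A$ whose action on the plane has nontrivial linear parts. With those steps gone, the homomorphism $\tau$ and its injectivity on $A$ are not established, and the proof does not go through in the stated generality.

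What does survive is worth noting. Your reduction of the theorem to the purely group-theoretic fact that a central $A\cong\mathbb Z^n$ meeting $[\Gamma,\Gamma]$ trivially is a direct factor of a finite-index subgroup (via $\Gamma^{\mathrm{ab}}$ modulo torsion and a saturation/complement argument) is correct, and this is also how one finishes; the entire content is the construction of a homomorphism from $\Gamma$ to an abelian group that is injective on $A$. The cited Bridson--Haefliger proof produces this without any properness assumption --- it lives in their Chapter II.6, \emph{before} the Flat Torus Theorem --- using only the splitting $\Min(a)\equiv Y\times\mathbb R$ for a single hyperbolic element (the paper's \fullref{lines}) and the fact that the centralizer (here all of $\Gamma$) preserves that splitting and acts on the $\mathbb R$--factor by translations, which yields the needed translation homomorphisms. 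Your argument does become correct if one adds the hypothesis that $A$ acts properly, and in particular it is fine for $n=1$, since a single hyperbolic isometry generates a proper $\mathbb Z$--action; that case is all that is used in \fullref{algebra}. But as a proof of \fullref{central} as stated, the appeal to the Flat Torus Theorem is a real gap, not a cosmetic one.
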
  

\section{Geometric results}

One of the important facts used for the proofs in this section is that the identity map $\partial X\to\partial X$ gives a continuous function $\bdryT X\to\bdry X$ between these two topological spaces.  Intuitively, if $p,q\in\partial X$ have small angle in the sense of \fullref{angle}, then if we take rays from a fixed basepoint $x_0$ to $p$ and $q$, these rays must fellow travel for a long time.  This is not necessarily true in reverse as can be seen in $\mathbb H^2$ since $\angle (p,q)=\pi$ for all $p\ne q\in\partial\mathbb H^2$.   The next lemma follows easily from point-set topology, however the proof given here uses this interaction between the two topologies. 

\begin{lemma}  Suppose $X$ is a $\cat(0)$ space and $F$ is an $n$--flat in 
$X$.  Then the identity map from $\bdryT X$ to 
$\bdry X$ restricted to the boundary of $F$ is a 
homeomorphism.  In particular, the image of $\bdryT F$ in 
$\partial_{\infty} X$ is a simple closed, curve.  
\end{lemma}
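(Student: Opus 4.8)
The plan is to exhibit the restricted identity map $\bdryT F \to \bdry F \subseteq \bdry X$ as a continuous bijection from a compact space to a Hausdorff space, so that it is automatically a homeomorphism. Continuity is immediate: the identity $\bdryT X \to \bdry X$ is continuous by the result cited in \cite{BridsonHaefliger}, and restriction to a subspace preserves continuity. Bijectivity is clear since it is literally the identity on the underlying point set $\bdry F$. Hausdorffness of the target is inherited from $\bdry X$ with the cone topology. So the entire content of the argument is the \emph{compactness of $\bdryT F$}, and this is where I would spend the effort.

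To see that $\bdryT F$ is compact, I would use \fullref{tits}(1): since $F$ is an $n$--flat (an isometrically embedded copy of $\E^n$), its Tits boundary is an $(n-1)$--sphere $S^{n-1}$ isometrically embedded in $\bdryT X$. Concretely, $\bdry \E^n \cong S^{n-1}$ in the visual topology, and because the angle metric on $\bdry \E^n$ is just the standard round angle, the Tits metric on $\bdryT \E^n$ coincides with the standard metric on $S^{n-1}$; in particular $\bdryT F$ is a genuine round sphere, hence compact as a metric space. (Alternatively one can note that $\bdry F$ with the cone topology is compact since $\wwbar F$ is compact, and that on $\bdry F$ the cone and Tits topologies already agree because geodesic rays in a flat converge in the visual topology precisely when their initial directions converge, which is the same as convergence in angle — but invoking \fullref{tits}(1) is cleaner.) Once $\bdryT F$ is known to be compact, the standard point-set fact ``a continuous bijection from a compact space to a Hausdorff space is a homeomorphism'' finishes the first assertion.

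For the ``in particular'' clause: the first part identifies $\bdryT F$ with its image in $\bdry X$ homeomorphically, and $\bdryT F$ is homeomorphic to $S^{n-1}$; specializing to the case relevant in this paper, when $F$ is a flat \emph{plane} ($n = 2$) this image is homeomorphic to $S^1$, i.e.\ a simple closed curve in $\partial_\infty X$. (In the general $n$-flat case the image is an embedded $(n-1)$-sphere, and for $n=2$ this is exactly a simple closed curve.)

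I do not expect a serious obstacle here; the only subtlety worth flagging is that one should not try to prove the homeomorphism ``by hand'' comparing basic open sets in the two topologies — the compact-to-Hausdorff trick, together with \fullref{tits}(1) to supply compactness of $\bdryT F$, is exactly the slick interaction between the two topologies the author is advertising, and it is what I would write down.
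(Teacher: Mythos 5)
Your proof is correct, but it is not the route the paper takes. You use the standard point-set shortcut: continuity of the identity $\bdryT X\to\bdry X$, bijectivity, compactness of $\bdryT F$ (a round sphere, via the isometric embedding of the flat's boundary), and Hausdorffness of $\bdry X$, so a continuous bijection from a compact space to a Hausdorff space is a homeomorphism. The paper explicitly acknowledges this easy point-set argument in the sentence preceding the lemma but deliberately does the opposite of what you advise against: it proves continuity of the inverse ``by hand,'' fixing a basepoint $x_0\in F$ and showing, via the law of cosines valid in a $\cat(0)$ space, that for any $p\in\bdryT F$ and $\alpha>0$ one can choose $r,\epsilon$ so that $U(p,r,\epsilon)\cap\partial_\infty F$ consists only of points at angle less than $\alpha/2$ from $p$ --- the point being to exhibit concretely the interaction between the visual and Tits topologies that drives the later arguments. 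So your approach buys brevity and robustness (no estimates needed once compactness is known), while the paper's buys a quantitative visual-neighborhood-to-angle estimate in flats that foreshadows the techniques of \fullref{main1}. One small caveat on your version: \fullref{tits}(1) as stated in the paper covers only flat planes, so for a general $n$--flat you should cite the standard fact (in \cite{BridsonHaefliger}) that the boundary of an $n$--flat is a round $(n-1)$--sphere isometrically embedded in $\bdryT X$, or at least note that the subspace Tits topology on $\partial F$ is compact; your parenthetical alternative and your remark that the ``simple closed curve'' conclusion is really the $n=2$ case are both apt.
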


\begin{proof}  Clearly the restricted map is a continuous bijection from the Tits
circle to the visual circle.  We show the inverse map is continuous.  Pick 
a basepoint $x_0\in F$.  Given a point $p\in\bdryT  F$ and an 
$\alpha >0$ we can find $r$ and $\epsilon$ so that the set 
$U(p,r,\epsilon)\cap\partial_{\infty} F$ consists of points whose angle 
with $p$ is less than $\alpha\slash 2$.  Indeed, use the law of cosines which holds 
in a $\cat(0)$ space. 
\end{proof}

We will also need the following lemmas for both theorems.  A
(complete) $\cat(0)$ space $X$ has {\it local extendability of
geodesics} if every geodesic segment in $X$ can be extended to a
geodesic line in $X$ (compare \cite[Definition
II.5.7]{BridsonHaefliger}).  It is not true in general that a complete
$\cat(0)$ space has this property, however it is known that if $X$ is
proper and the full isometry group of $X$ has a compact fundamental
domain, then $X$ is {\it almost extendable}.  This means there exists
a global constant $r\ge 0$ such that for any pair $a,b\in X$ there
exists a geodesic ray beginning at $a$ and passing within $r$ of $b$.
See Geoghegan and Ontaneda \cite{GO} for details.

The assumption of local extendabililty of geodesics is necessary for making certain conclusions about the space $X$, however for results about $\bdry X$ only it is often unnecessary to have this extra assumption.   Instead, the notion of a quasi-dense set is sufficient for results about $\bdry X$.  A subset $M$ of $X$ is called {\it quasi-dense} if there exists a constant $K>0$ so that each point of $X$ is within $K$ of some point of $M$.  Notice that if a group $\Gamma$ acts cocompactly, then the orbit of a point is a quasi-dense subset.

\begin{lemma} If $M\subset X$ is closed, convex, and
quasi-dense, then $\partial X=\partial M$.
\end{lemma}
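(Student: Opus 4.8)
The plan is to show the two containments $\partial M \subseteq \partial X$ and $\partial X \subseteq \partial M$ and to upgrade these set-theoretic equalities to a homeomorphism in the cone topology. First I would fix a basepoint $x_0 \in M$, which is legitimate since $M$ is nonempty (if $M = \emptyset$ then $X = \emptyset$ by quasi-density, and there is nothing to prove). Using the convention that $\partial X$ is identified with $\partial_{x_0} X$, the rays in $\partial_{x_0} M$ emanating from $x_0$ are in particular rays in $X$, so since $M$ is closed in $X$ the inclusion $\partial M \hookrightarrow \partial X$ is immediate at the level of point sets; convexity guarantees that a geodesic ray between two points of $M$ stays in $M$, so the subspace structure is inherited correctly.

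For the reverse inclusion, let $c\co [0,\infty)\to X$ be a geodesic ray with $c(0)=x_0$; I want to produce an asymptotic ray lying in $M$. The idea is to use quasi-density: for each $n\in\mathbb N$ pick a point $m_n \in M$ with $d(c(n), m_n) \le K$, and let $\sigma_n$ be the geodesic segment in $M$ (using convexity) from $x_0$ to $m_n$. These segments have lengths tending to infinity, and by properness of $X$ (Arzel\`a--Ascoli, exactly as in the standard construction of boundary points in Bridson--Haefliger, Lemma II.8.2) a subsequence converges uniformly on compact sets to a geodesic ray $\sigma\co[0,\infty)\to X$ issuing from $x_0$. Because each $\sigma_n$ lies in the closed set $M$, so does the limit $\sigma$. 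The remaining point is that $\sigma$ is asymptotic to $c$: the $\cat(0)$ convexity of the distance function forces $d(\sigma_n(t), c(t))$ to be controlled by $d(m_n, c(n)) \le K$ for $t \le n$ (the function $t \mapsto d(\sigma_n(t), c(t))$ is convex, vanishes at $0$, and is at most $K + $ (a term going to $0$) at the endpoint), and passing to the limit gives $d(\sigma(t), c(t)) \le K$ for all $t$. Hence $[\sigma] = [c]$ in $\partial X$ and $[c]$ has a representative in $M$, so $[c] \in \partial M$.

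Finally I would observe that the bijection $\partial M \to \partial X$ is a homeomorphism. This is essentially formal once the point sets agree: a neighborhood basis at a point $p$ in the cone topology on $\partial X$ is given by the sets $U(c,r,\epsilon)$ with $c$ the ray to $p$ from $x_0$, and the corresponding basis on $\partial M$ is given by $U(c,r,\epsilon) \cap \wwbar M$; the point-set equality $\partial M = \partial X$ together with the fact that projections $p_r\co X \to S(x_0,r)$ restrict compatibly shows these neighborhood filters match. I expect the main obstacle to be the asymptoty estimate in the middle paragraph — specifically making the convexity-of-distance argument clean enough that the constant $K$ genuinely survives to the limit ray rather than drifting — but this is exactly the kind of estimate already packaged in the proof of \cite[Proposition II.8.2]{BridsonHaefliger}, so in the write-up I would cite that construction and only indicate the modification (keeping the segments inside the convex set $M$).
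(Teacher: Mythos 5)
Your argument is essentially identical to the paper's proof: pick $m_n\in M$ within $K$ of $c(n)$ by quasi-density, use convexity to keep the segments $[x_0,m_n]$ in $M$, and pass to a limit ray in the closed set $M$ that is asymptotic to $c$ (the bound you get in the limit is really more like $2K$ since $d(x_0,m_n)$ need not equal $n$, but any uniform constant suffices for asymptoticity). The extra remarks about the cone-topology homeomorphism and the citation of Bridson--Haefliger are fine but not needed for the set-level statement the lemma asserts.
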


\begin{proof} If $p\in\partial X$, then let $c\co[0,\infty)\to X$ be a geodesic ray from $x_0$ whose endpoint in $\wwbar{X}$ is $p$.  For each $n\in\mathbb N$, choose $m_n\in M$ with $d(m_n,c(n))<K$.  The geodesics $[x_0,m_n]$ are all contained in $M$ since $M$ is convex.  The sequence $[x_0,m_n]$ converges to a geodesic ray $c'$ in $M$ with endpoint in $\partial M$.  The endpoint of $c'$ must be $p$ since $c'$ is asymptotic to $c$ by construction, thus $p\in\partial M$ and we are done.
\end{proof}

\begin{corollary}\label{all} Suppose the group $G$ acts geometrically on a $\cat(0)$ 
space $X$ and suppose $Z$ is a closed, convex, $G$--invariant subset of 
$X$.  Then $\partial Z=\partial X$.  Furthermore, if $X$ has local 
extendability of geodesics, then $Z=X$.  
\end{corollary}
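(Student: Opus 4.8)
The plan is to derive both conclusions directly from the preceding lemma together with the splitting and sandwich results from Section~2. For the equality $\partial Z=\partial X$: since $G$ acts geometrically on $X$, any orbit $G\cdot x_0$ is quasi-dense in $X$; as $Z$ is $G$-invariant and nonempty, pick $z_0\in Z$ and note $G\cdot z_0\subseteq Z$, so $Z$ contains a quasi-dense subset of $X$ and is therefore itself quasi-dense in $X$. Since $Z$ is also closed and convex by hypothesis, the previous lemma applies with $M=Z$ and gives $\partial X=\partial Z$ immediately. This first part needs no extendability hypothesis, matching the statement.

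For the second conclusion, assume $X$ has local extendability of geodesics and suppose for contradiction that $Z\neq X$. Since $Z$ is closed and convex, there is a point $x\in X\setminus Z$ with $a:=d_Z(x)>0$; let $\pi(x)\in Z$ be its (unique) nearest-point projection. The idea is to produce a geodesic line through $x$ that "escapes" $Z$ in a way the projection cannot follow, contradicting quasi-density of $Z$. First extend the segment $[\pi(x),x]$ past $x$ to a full geodesic line $c\co\mathbb R\to X$ using local extendability, with $c(0)=\pi(x)$ and $c(a)=x$. Convexity of $d_Z$ along geodesics, together with $d_Z(c(0))=0$ and the fact that $t\mapsto d_Z(c(t))$ is convex and vanishes at $0$, forces $d_Z(c(t))\to\infty$ as $t\to+\infty$ (it cannot stay bounded on a ray without being identically zero, since a bounded convex function on $[0,\infty)$ is nonincreasing, yet it is $0$ at the origin and positive at $t=a$). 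Hence the ray $c|_{[0,\infty)}$ has all its far points at unbounded distance from $Z$, contradicting that $Z$ is quasi-dense in $X$. Therefore $Z=X$.

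The step I expect to require the most care is the convexity argument showing $d_Z\circ c$ is unbounded on the forward ray: one must invoke that $d_Z(\cdot)=d(\cdot,Z)$ is a convex function on $X$ (standard for closed convex $Z$ in a $\cat(0)$ space, e.g.\ \cite[Proposition II.2.5]{BridsonHaefliger}), observe it restricts to a convex function of $t$ along $c$, and then use the elementary fact that a convex function on $\mathbb R$ that is $0$ at one point and strictly positive at another must tend to $+\infty$ on at least one side. An alternative, slightly cleaner route avoids "unboundedness" entirely: the line $c$ determines a point $p=c(+\infty)\in\partial X=\partial Z$ by the first part, so there is a ray $c'$ in $Z$ asymptotic to $c$; but then $d_Z(c(t))$ stays bounded, so by convexity and $d_Z(c(0))=0$ the function $d_Z\circ c$ is identically $0$ on $[0,\infty)$, forcing $x=c(a)\in Z$, the desired contradiction. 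I would present this second version, as it reuses $\partial Z=\partial X$ and keeps the proof short.
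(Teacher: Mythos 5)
Your proof is correct and follows essentially the same route as the paper: the first part is identical (pick $z_0\in Z$, note $G\cdot z_0\subset Z$ is quasi-dense since the action is cocompact, so $Z$ is quasi-dense and the preceding lemma gives $\partial Z=\partial X$). For the second conclusion the paper merely says it ``follows from the previous lemma,'' and your argument---extending $[\pi(x),x]$ to a line and using convexity of $d_Z$ (in either of your two versions) to contradict quasi-density or to force $d_Z\circ c\equiv 0$---correctly supplies the detail left implicit there.
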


\begin{proof} Let $z_0\in Z$.  Then the orbit $G\cdot z_0\subset Z$ is a quasi-dense subset of $X$ which forces $Z$ to be quasi-dense.  The corollary now follows from the previous lemma.
\end{proof}

\begin{lemma}\label{compact}Suppose $X$ is a $\cat(0)$ space with 1--dimensional visual boundary $\bdry X$. Suppose $F$ is a 2--flat in $X$ and let $\mathcal{P}(F)$ be the set of all flats in $X$ that are parallel to $F$.  Then $\mathcal{P}(F)$ is isometric to a product $F\times Y$ with $Y$ compact. 
\end{lemma}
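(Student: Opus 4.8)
The plan is to first show that $\mathcal{P}(F)$ decomposes isometrically as $F\times Y$ for some $\cat(0)$ space $Y$, and then to use the hypothesis that $\bdry X$ is $1$--dimensional to force $Y$ to be bounded, hence compact.

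For the decomposition I would apply \fullref{lines} twice. Fix a geodesic line $c\subset F$, so that $F$ is foliated by the lines of $F$ parallel to $c$. By \fullref{lines}, $\mathcal{P}(c)$ splits isometrically as $Y_1\times\mathbb R$, where the $\mathbb R$--factor is the $c$--direction (so that $c$ is a fibre $\{y_0\}\times\mathbb R$), the set $Y_1$ is closed and convex in $X$ (hence itself a $\cat(0)$ space), and $F=L\times\mathbb R$ for a geodesic line $L\subset Y_1$. The crucial point is that \emph{every} flat $F'$ parallel to $F$ (that is, at finite Hausdorff distance from $F$) is contained in $\mathcal{P}(c)$. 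Indeed, finite Hausdorff distance gives $\bdry F'=\bdry F$ as subsets of $\bdry X$; the two endpoints of $c$ are antipodal in the circle $\bdry F'$, hence joined by a geodesic line $c'\subset F'$; every line of $F'$ that is parallel within $F'$ to $c'$ has the same endpoints at infinity as $c$ (parallel lines in a Euclidean plane share their endpoints), hence is parallel to $c$ in $X$ and lies in $\mathcal{P}(c)$; and $F'$ is the union of these lines. Inside $Y_1\times\mathbb R$ each such line is a fibre $\{y\}\times\mathbb R$, so $F'=L'\times\mathbb R$ for a geodesic line $L'\subset Y_1$, and one checks that $L'\parallel L$; conversely, $L'\times\mathbb R$ is a flat parallel to $F$ whenever $L'\parallel L$. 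Therefore $\mathcal{P}(F)=\mathcal{P}_{Y_1}(L)\times\mathbb R$, where $\mathcal{P}_{Y_1}(L)$ is the parallel set of $L$ computed inside $Y_1$; a second application of \fullref{lines}, now inside the $\cat(0)$ space $Y_1$, gives $\mathcal{P}_{Y_1}(L)=Y\times\mathbb R$ with $Y$ closed and convex in $Y_1$ (hence in $X$) and $\cat(0)$. Combining these, $\mathcal{P}(F)=Y\times\mathbb R\times\mathbb R\cong Y\times F$, the two $\mathbb R$--factors spanning a Euclidean plane identified with $F$.

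To finish, observe that $\mathcal{P}(F)=F\times Y$ is closed and convex in $X$, so $\bdry(F\times Y)$ embeds in $\bdry X$; and by the Remark on products $\bdry(F\times Y)\cong\bdry F\star\bdry Y\cong S^1\star\bdry Y$. If $Y$ were unbounded it would contain a geodesic ray (take a sequence of points of $Y$ going to infinity and pass to a convergent subsequence of the geodesics joining them to a fixed basepoint, using that $X$ is proper), so $\bdry Y\ne\emptyset$; but then $S^1\star\bdry Y$ contains $S^1\star\{p\}$, which is a closed $2$--disk, and this disk would then embed in $\bdry X$, contradicting $\dim\bdry X=1$. Hence $Y$ is bounded, and being a closed convex subset of the proper space $X$ it is compact. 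The step I expect to be the main obstacle is promoting the line-level splitting of \fullref{lines} to the assertion that every flat parallel to $F$ already lies inside $\mathcal{P}(c)$; everything else is either bookkeeping or a short point-set argument.
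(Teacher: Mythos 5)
Your proof is correct, but it reaches the splitting $\mathcal{P}(F)\cong F\times Y$ by a different route than the paper. The paper disposes of the splitting in one line by appealing to the Sandwich Lemma (\fullref{flats}), i.e.\ to the standard Bridson--Haefliger statement that the union of all flats parallel to a given flat splits isometrically off that flat; you instead derive it from \fullref{lines} applied twice, first to a line $c\subset F$ (giving $\mathcal{P}(c)=Y_1\times\mathbb R$) and then to the line $L\subset Y_1$ with $F=L\times\mathbb R$. The extra steps this forces on you -- that a flat $F'$ at finite Hausdorff distance from $F$ has $\bdry F'=\bdry F$, that the endpoints of $c$ are antipodal in the isometrically embedded Tits circle of $F'$ and hence spanned by a family of lines of $F'$ parallel to $c$ in $X$, and that lines parallel to $c$ inside $\mathcal{P}(c)$ are exactly the fibres $\{y\}\times\mathbb R$ -- are all handled correctly, and they make your argument self-contained where the paper's is a citation (indeed the Sandwich Lemma as stated concerns two convex sets, so the paper's one-liner is itself compressing an argument much like yours). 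The compactness half is essentially the paper's: the paper notes that $\bdry Y\ne\emptyset$ would make $\bdryT(F\times Y)$ a spherical join of a circle with a nonempty set and hence force a $2$--dimensional subset of the visual boundary, while you run the same dimension count directly in the visual join by exhibiting the $2$--disk $S^1\star\{p\}$; both correctly use properness to get a boundary point from unboundedness of $Y$ and closed-convexity to embed the join in $\bdry X$.
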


\begin{proof}
By the Sandwich Lemma (\fullref{flats} here), the set $\mathcal{P}(F)$ of all flats in $X$ 
parallel to $F$ isometrically splits as $F\times Y$ for some closed, convex subset $Y$ of $X$. 
We claim that $Y$ must be compact.  Indeed, otherwise, $\partial Y\ne\emptyset$ 
and $\bdryT (F\times Y)$ is isometric to the spherical join of 
$\bdryT F$ and $\bdryT Y$.  If $Y\ne\emptyset$, this 
would force $\bdryT X$ to be 2--dimensional since $\bdryT F$ 
is a circle.  This would force the visual topology to contain a 
2--dimensional subset which is gives a contradiction.
\end{proof}

The following result will provide one of the crucial steps in \fullref{main1}.  
   
\begin{thm}\label{circle} Suppose $X$ is a $\cat(0)$ space with local extendability of 
geodesics and $G$ is a group acting geometrically on $X$.  Suppose the 
visual boundary of $X$ is homeomorphic to a circle. Then exactly one of 
the following is true.
\begin{enumerate}
\item $X$ is isometric to a Euclidean plane and $G$ is a Bieberbach group 
(virtually $\mathbb Z\times\mathbb Z$).  
\item $X$ is quasi-isometric to $\mathbb H^2$ and $G$ is topologically 
conjugate to a fuchsian group.
\end{enumerate}
\end{thm}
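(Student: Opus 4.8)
The plan is to apply the Flat Plane Theorem (\fullref{flatpl}): either $X$ contains an isometrically embedded Euclidean plane, or $G$ is word hyperbolic. I would treat these two cases separately and check at the end that they are mutually exclusive.

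Suppose first that $G$ is word hyperbolic. Then $G$ is quasi-isometric to $X$, so $X$ is $\delta$-hyperbolic and $\bdry X$ is $G$-equivariantly homeomorphic to the Gromov boundary $\partial G$, which is therefore a circle. By the convergence group theorem of Tukia, Gabai and Casson--Jungreis (building on Gromov), a word hyperbolic group whose boundary is $S^1$ acts on that circle topologically conjugately to the action of a cocompact Fuchsian group on $\bdry\mathbb H^2$; in particular $G$ is virtually a closed hyperbolic surface group, hence quasi-isometric to $\mathbb H^2$, and so is $X$. This yields alternative (2).

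Now suppose $X$ contains a $2$--flat $F$; here the point is to pin down the Tits boundary. By the first lemma of this section the identity map carries $\bdryT F$ homeomorphically onto a simple closed curve in $\bdry X$, and a simple closed curve in a circle must be the whole circle, so the point sets of $\bdryT F$ and $\bdry X$ coincide. Since $\bdryT F$ is isometrically embedded in $\bdryT X$ and is a round circle of circumference $2\pi$ (\fullref{tits}(1)), this forces $\bdryT X$ itself to be a round circle of circumference $2\pi$. Consequently every $2$--flat $F'$ in $X$ satisfies $\bdryT F'=\bdryT X=\bdryT F$; choosing a line of $F$ and a line of $F'$ sharing the same pair of (Tits-antipodal) endpoints and applying the flat strip theorem together with \fullref{lines}, one concludes that $F'$ is parallel to $F$. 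Hence $g\cdot F$ is parallel to $F$ for every $g\in G$, so the set $\mathcal P(F)$ of all flats parallel to $F$ is a closed, convex, $G$--invariant subset of $X$. By \fullref{compact} it is isometric to $F\times Y$ with $Y$ compact, and by \fullref{all} together with local extendability of geodesics, $\mathcal P(F)=X$. Finally, if $Y$ were not a single point it would contain a nondegenerate geodesic segment $\sigma$, and $\{*\}\times\sigma$ in $X=F\times Y$ could not be extended to a geodesic line --- any such line would lie in the fiber $\{*\}\times Y$, which, being compact, contains no geodesic line --- contradicting local extendability. Thus $X=F$ is a Euclidean plane and, by Bieberbach's theorem, $G$ is a Bieberbach group. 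This yields alternative (1).

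The two alternatives exclude each other: in case (1) the group $G$ contains $\mathbb Z\times\mathbb Z$ and so is not word hyperbolic, and $\mathbb E^2$ is not quasi-isometric to $\mathbb H^2$; in case (2) the space $X$ is $\delta$-hyperbolic, so it has no $2$--flat and is not isometric to $\mathbb E^2$. The step I expect to demand the most care is the proof that every $2$--flat in $X$ is parallel to $F$, i.e. that $\mathcal P(F)$ is $G$--invariant, since that is exactly where one must pass information back and forth between the visual and Tits topologies via the first lemma of this section.
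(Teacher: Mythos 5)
Your proposal is correct and takes essentially the same route as the paper: the Flat Plane Theorem dichotomy, the Tukia/Gabai/Casson--Jungreis convergence-group theorem in the hyperbolic case, and in the flat case the boundary-circle lemma to see that every $G$--translate of the $2$--flat $F$ has the same boundary circle and is hence parallel to $F$, so that $\mathcal{P}(F)\equiv F\times Y$ with $Y$ compact is closed, convex and $G$--invariant, giving $X=F$ via \fullref{all} and local extendability. Your variations (arguing directly that any flat's circle is the whole visual circle instead of the paper's three-case comparison, and spelling out why $Y$ is a point) are minor; just note that deducing ``same boundary circle $\Rightarrow$ parallel flats'' really uses asymptotic rays in all boundary directions from chosen basepoints, not a single pair of parallel lines, though the paper likewise asserts this step without proof.
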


\begin{proof}  We know by the Flat Plane Theorem that $X$ is either 
$\delta$--hyperbolic or $X$ contains a 2--flat.  Suppose $X$ contains a 2--flat 
-- ie, an isometric embedding of $f\co\mathbb E^2\to X$.   Denote by $F$, 
the image of $f$.  Let $g\in G$ and consider the flat $g\cdot F$.  Each of 
$F$ and $g\cdot F$ contribute a circle to the Tits boundary of $X$.  There 
are three possibilities -- the circles are the same, they intersect but are 
unequal, or they are disjoint.  We claim the last two cannot happen.  Indeed, if 
the two circles are disjoint, then they map homeomorphically to disjoint 
circles in the visual topology. This obviously cannot happen since 
$\partial_{\infty} X$ is homeomorphic to a single circle.  If the two 
circles intersect but are not the same, then again, this would happen in 
the visual topology which is absurd.  Thus $F$ and $g\cdot F$ have the 
same boundary circle.  This implies that $g\cdot F$ is parallel to $F$.  Consider the set
$\mathcal{ P}(F)$ of all flats in $X$ that are parallel to $F$.  By \fullref{compact}, $\mathcal{P}(F)=F\times Y$ where $Y$ is compact.  Since the chosen $g$ was arbitrary, 
$F\times Y$ is a closed, convex, $G$--invariant subset of $X$.  By \fullref{all}, we have $X=F\times Y$.  With the assumption of local extendability, $Y$ must be a single point and we are in the 
first case of the theorem.  

Suppose $X$ is $\delta$--negatively curved.  Then the group $G$ is word 
hyperbolic and $\partial G$ (the Gromov boundary of the group) is the same 
as $\bdry X$, \cite{BridsonHaefliger}.  We also know $G$ acts as a 
convergence group on $\bdry X$ (Gromov \cite{Gromov}).   But now the work of 
Casson--Jungreis \cite{CassJung}, Tukia \cite{Tukia}, Gabai \cite{Gabai} implies that $G$ is topologically conjugate to a fuchsian group.  This also gives $G$ quasi-isometric to $\mathbb H^2$ which, in turn, gives $X$ is quasi-isometric to $\mathbb H^2$ as needed 
for case two of the theorem.
\end{proof}

\begin{remark}  Note that in case two of the theorem, we do not get that 
$X$ is isometric to the hyperbolic plane.  The following example was 
explained to me by Phil Bowers.  If you take the constant seven degree 
triangulation of the plane, make all edge lengths one, and glue in 
equilateral triangles, you will obtain a $\cat(0)$ space  $X$ which is 
quasi-isometric to $\mathbb H^2$ but not isometric to it.  Indeed, there 
are flat triangles in $X$.  Furthermore, the (2,3,7) triangle group acts 
in the obvious way as a geometric group action on $X$.  Thus we leave the 
statement as it is in the theorem and cannot hope for more.
\end{remark}

\begin{thm}\label{main1}  Suppose $G$ acts geometrically on a $\cat(0)$ space $X$ and 
suppose $\partial_{\infty}X$ is homeomorphic to $\Sigma C$ where $C$ 
denotes the Cantor set.  Then, the following are true.
\begin{enumerate} 
\item $\bdryT X$ is isometric to the suspension of an 
uncountable discrete space.
\item  There exists a closed, convex, $G$ invariant subset $X'$ of $X$ which splits as $Y\times\mathbb R$. If $X$ has local extendability of geodesics, then $X=Y\times\mathbb R$. 
\item The subspace $Y$ is $\cat(0)$ and has $\bdry Y$ homeomorphic to 
a Cantor set.
\end{enumerate}
\end{thm}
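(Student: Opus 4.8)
The plan is to identify the two suspension points of $\Sigma C$ as a canonical pair in $\bdry X$, use them to split off an $\mathbb{R}$--factor, and then read off the Tits boundary of $X$ and the boundary of the complementary factor; this follows the same pattern as the proof of \fullref{circle}. \emph{Step 1 (the poles are canonical).} In $\Sigma C$ the suspension points $n,s$ are precisely those for which $\{n,s\}$ is a two--point subset of $\bdry X$ whose complement has more than two --- in fact uncountably many --- connected components, while the complement of every other two--point subset is connected or has exactly two components. Hence $\{n,s\}$ is a $G$--invariant set, and after passing to a subgroup $G^{+}\le G$ of index at most $2$ I may assume $G^{+}$ fixes each of $n$ and $s$. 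I record two elementary properties of $\Sigma C$ for later use: deleting $\{n,s\}$ leaves $C\times(0,1)$, whose connected components (the ``meridians'') are indexed by $C$; and every topologically embedded circle in $\Sigma C$ is a union of two meridians, so it contains both $n$ and $s$.

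\emph{Step 2 (the splitting --- the heart of the matter).} First, $X$ is not $\delta$--hyperbolic: otherwise $G$ would be word hyperbolic with $\partial G\cong\bdry X\cong\Sigma C$, and since $\Sigma C$ is a connected infinite continuum, $G^{+}$ would act on it as a minimal convergence group, which cannot have the finite orbit $\{n\}$. So by the Flat Plane Theorem (\fullref{flatpl}) $X$ contains a $2$--flat $F$; by the first lemma of this section the image of $\bdryT F$ in $\bdry X$ is an embedded circle, hence a union of two meridians, so $n,s\in\bdry F$. The crucial claim will be that $n$ and $s$ are an antipodal pair of $\bdryT F$, equivalently $\Td(n,s)=\pi$. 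Granting this, $F$ contains a geodesic line $c$ with $c(-\infty)=s$, $c(+\infty)=n$. Any geodesic line $c'$ with the same pair of endpoints is parallel to $c$: two rays asymptotic to $n$ (resp. to $s$) stay a bounded distance apart, so the convex function $t\mapsto d(c(t),c'(t))$ is bounded above near $+\infty$ and near $-\infty$ and is therefore constant. Thus the parallel set $\mathcal{P}(c)$ is exactly the set of geodesic lines from $s$ to $n$; since $G$ permutes these lines, $\mathcal{P}(c)$ is a closed, convex, $G$--invariant subset of $X$, which by \fullref{lines} splits isometrically as $Y\times\mathbb{R}$. Set $X'=\mathcal{P}(c)$. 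By \fullref{all}, $\bdry X'=\bdry X$, and if $X$ has local extendability of geodesics then $X'=X$; this is (2).

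\emph{Step 3 (the two boundaries).} Since $X'$ is a convex subset of $X$ with $\bdry X'=\bdry X$, the Tits boundary $\bdryT X'$ is isometric to $\bdryT X$; and $\bdryT(Y\times\mathbb{R})$ is the spherical suspension of $\bdryT Y$, while $\bdry(Y\times\mathbb{R})=\Sigma(\bdry Y)$. Comparing $\Sigma(\bdry Y)\cong\Sigma C$ and deleting the suspension points from each side gives a homeomorphism $\bdry Y\times(0,1)\cong C\times(0,1)$; the components of the right--hand side are arcs, so those of the left--hand side are too, which forces $\bdry Y$ to be totally disconnected, and then passing to the spaces of connected components recovers $\bdry Y$ on the left and $C$ on the right, so $\bdry Y\cong C$. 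Since $Y$ is a closed convex subspace of $X$ it is itself $\cat(0)$, which together with the previous sentence gives (3). Finally, a totally disconnected space contains no embedded arc; since the identity map $\bdryT Y\to\bdry Y$ is continuous and $\bdry Y$ is Hausdorff, a geodesic of positive length $<\pi$ in the $\cat(1)$ space $\bdryT Y$ would map to an embedded arc in $\bdry Y$, which is impossible. Hence distinct points of $\bdryT Y$ are at Tits distance $\ge\pi$, i.e. $\bdryT Y$ is an uncountable discrete space, and $\bdryT X=\bdryT X'$ is its spherical suspension, which is (1).

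\emph{The main obstacle} is the claim $\Td(n,s)=\pi$ used in Step 2. If instead $\Td(n,s)=\alpha<\pi$, then by \fullref{tits}(3) there is a unique geodesic from $n$ to $s$ in the $\cat(1)$ space $\bdryT X$; one checks that it must traverse a single meridian and that, being unique, it is preserved --- in fact fixed pointwise --- by $G^{+}$, so that $G^{+}$ fixes a whole Tits arc of boundary points. Ruling out this configuration is where the interplay among the visual topology, the Tits metric and the $G$--action has to be exploited carefully, and I expect it is where the subtlety (and the gap acknowledged above) resides.
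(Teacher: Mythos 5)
Your outline reproduces the architecture of the paper's argument, and Steps 1 and 3 are essentially sound (your use of minimality of the convergence action to rule out the $\delta$--hyperbolic case is a legitimate substitute for the paper's appeal to Swarup's local connectivity theorem, and your component-counting derivation of $\bdry Y\cong C$ and of the Tits-discreteness of $\bdryT Y$ is a valid variant of the paper's argument that any Tits path between points of $\partial Y$ would have to pass through $N$ or $S$ in the visual topology). But the proof is not complete: the claim $Td(N,S)=\pi$, which you yourself identify as ``the main obstacle,'' is exactly the point on which the whole theorem hinges, and you do not prove it. Without it there is no geodesic line from $N$ to $S$, hence no parallel set $\mathcal{P}(c)$, no splitting $Y\times\mathbb R$, and none of items (1)--(3). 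Saying that ruling out $Td(N,S)=\alpha<\pi$ ``is where the subtlety resides'' is an accurate diagnosis, not an argument.

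In the paper this claim is \fullref{key}, and its proof is the substantial part of the theorem (it is the statement whose original version contained the gap found by the referee). The mechanism is worth knowing: after passing to the index $\le 2$ subgroup fixing $N$ and $S$, one takes an infinite-order element $g$ (\fullref{swenson1}) and studies $\Min(g)=Y_g\times\mathbb R$, on which $C(g)$ acts geometrically (\fullref{minset}); since $\Fix(\overline g)=\bdry\Min(g)$ (\fullref{dynamics}) and $\overline g$ fixes the putative Tits segment $[N,S]$, that segment lies in $\bdryT\Min(g)$, so $\partial Y_g\ne\emptyset$. Hopf's theorem then gives $Y_g$ one, two, or infinitely many ends. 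One end is excluded because a half-plane would make $\bdryT\Min(g)$ a segment of length $\le\pi$, contradicting \fullref{halfplane}; two ends forces, via \fullref{circle}, $\Min(g)\equiv\mathbb E^2\times Z$ with circle Tits boundary, and then a conjugate $g'=hgh^{-1}$ with $\Min(g')\ne\Min(g)$ together with \fullref{swenson2} produces a convex subgroup whose convex set has Tits boundary a segment of length $\le\pi$ containing $[N,S]$, again contradicting \fullref{halfplane}; so $Y_g$ has infinitely many ends, which forces $\{g^{\pm\infty}\}=\{N,S\}$ (these are the only points of local connectivity of $\bdry X$), so $N$ and $S$ are the endpoints of a geodesic line and $Td(N,S)=\pi$. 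None of this machinery (the Min-set splitting, the fixed-point description of $\bdry\Min(g)$, the ends trichotomy, Swenson's intersection theorem, or the prior \fullref{circle}) appears in your proposal, so as written the argument has a genuine and central gap.
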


\begin{proof}  By the Flat Plane Theorem, we know $X$ is either 
$\delta$--negatively curved or else $X$ contains a flat plane.  If $X$ 
negatively curved, then $G$ is word hyperbolic.  Since 
$\bdry X$ is connected, $G$ is one-ended.  But  $\bdry X$ is a suspension of a 
Cantor set and is thus not locally connected.  This contradicts the work 
of Swarup \cite{Swarup} on boundaries of one-ended hyperbolic groups.  

Thus $X$ must contain a flat plane, $F$.  Since $F$ is a closed, convex subset 
of $X$, we know $\bdry F\cong S^1$ embeds in $\partial_{\infty} 
X$.  Also $\bdryT F\equiv S^1$ embeds in $\bdryT X$.   ($\cong$ means ``homeomorphic to" while 
$\equiv$ means ``isometric to"). 

Note that any circle in $\bdry X$ must go through the suspension points of $\bdry X$.  Let's call these suspension points $N$ and $S$. 

We claim $Td(N,S)=\pi$.  We prove this fact in a subsequent lemma.

Thus there is a geodesic line $c\co\mathbb R\to X$ 
with image in $F$, with endpoints $N$ and $S$ in $\bdry X$.  
Consider $\mathcal{P}(c)$ in $X$, the set of all geodesic lines in $X$ that are 
parallel to the line $c$.  

We know by \fullref{lines} that $P(c)\equiv Y\times\mathbb R$ is a closed convex 
subset of $X$.  Furthermore, $\bdryT \mathcal{P}(c)\equiv\Sigma(\bdryT Y)$.  The suspension points of this are $N$ and $S$, the suspension points of $\bdry X$.  

Given $g\in G$, we have $g\cdot F$ is another flat in $X$ with $\bdryT (g\cdot F)$ an $S^1$ in 
$\bdryT X$.  There are three possible configurations. 

\begin{enumerate}
\item $\bdryT F$ and $\bdryT (g\cdot F)$ are disjoint.
\item $\bdryT F=\bdry (g\cdot F)$.
\item $\bdryT F\cap\bdryT (g\cdot F)\ne\emptyset$ but are unequal.
\end{enumerate}
If the two circles are disjoint, then they would map to disjoint circles in $\bdry X$ under the identity map but this is impossible since any pair of circles in $\bdry X$ intersect.  Thus case (1) cannot happen. 

If $\bdryT F=\bdryT (g\cdot F)$, then the flats $F$ and $g\cdot F$ are parallel in $X$.  We claim there must be some $g\in G$ for which this does not happen.  If this were true for all elements $g\in G$, then consider the set  $\mathcal{P}(F)\equiv Y\times F$ of all flats in $X$ that are parallel to $F$.  $Y$ must be compact by \fullref{compact}.  Now the group $G$ leaves $\mathcal{P}(F)$ invariant and thus acts geometrically on $\mathcal{P}(F)$.  But $\bdry \mathcal{P}(F)$ is a circle so $G$ and $X$ fall into the setting of \fullref{circle}.   

Thus there exists $g\in G$ as in the case (3) above.  In this case, the circles $\bdryT F$ and $\bdryT (g\cdot F)$ must both contain $N$ and $S$ since any two distinct circles in $\bdry X$ share the points $N$ and $S$.   Thus the line $g\cdot c$ in $g\cdot F$ is parallel to the line $c$ in $F$.  

Since the action of $G$ must take flats in $X$ to flats in $X$, we clearly 
have that $G$ leaves $\mathcal{P}(c)$ invariant.  Thus $\mathcal{P}(c)$ is a closed, convex, 
$G$--invariant subset of $X$.  By \fullref{all} $\mathcal{P}(c)$ is quasi-dense in $X$ and 
$\partial X=\partial \mathcal{P}(c)=\Sigma\partial Y$.  If $X$ has local extendability of geodesics, 
we indeed have $\mathcal{P}(c)=X$ by the same lemma.  We have now shown the second item
in the theorem is true.  We also know that $\bdry X=\bdry\mathcal{P}(c)\cong\Sigma(\bdry Y)$ where 
$N$ and $S$ are the suspension points. 

Let $z\ne w\in\partial Y$.  Since $\bdryT X$ is isometric to the 
suspension of $\partial Y$, we clearly have $Td(z,w)\le\pi$ because  we can
construct a Tits path from $z$ to $w$ by combining the segments $[z,N]$ and $[N,w]$ (or $S$ instead of $N$).  Each of these segments is length $\pi\slash 2$ and thus we have a path from $z$ to $w$ of length $\pi$.  We claim that $Td(z,w)=\pi$.  

If there were a shorter Tits path between them, then this path would not pass through $N$ or $S$.  
This path would also give a path in the visual topology after mapping via the identity map between the two topologies.  Since $\bdry X$ is the suspension of a Cantor set and $z,w\notin\{N,S\}$, the only paths between them go through $N$ or $S$ unless they are on the same suspension line.  But this is clearly not the case for two points in $\partial Y$.  Thus the Tits metric restricted to $\partial Y$ is discrete and item (1) of the theorem is complete. 

Since $\bdry X\cong\Sigma(\bdry Y)$, each point $p\in\partial X$ which is neither $N$ or $S$ lies on a unique arc between $N$ and $S$ in $\bdry X$.  The assumption that $\bdry X$ is also homeomorphic to $\Sigma C$ (with suspension points $N$ and $S$) implies $\bdry Y$ is homeomorphic to $C$.  Thus all three items in the theorem are true. 
\end{proof}

The proof of \fullref{key} below requires several facts about the action of a hyperbolic isometry on the boundary.   If $G$ is a group acting geometrically on a $\cat(0)$ space $X$ and $g\in G$ of infinite order, then $g$ acts on by translation on a geodesic line in $X$ with two endoints in $\bdry X$.  This $g$ also acts on $\bdry X$ as a homeomorphism with a special fixed point set which we discuss below.   Every $g\in G$ will fix the suspension points of $\bdry X$ simply because of topology.  The key observation is that if two infinite order elements (which do not have a common power) fix the same suspension line between the suspension points of $\bdry X$, then one of them must have the suspension points as its endpoints in which case the lemma will be true.  

Let $g$ be an isometry of a $\cat(0)$ space $X$.  Recall that $\Min(g)=\{x\in X\ |\ d(x,g\cdot x)=|g|\}$ where $|g|=\inf\{d(x,g\cdot x)\ |\ x\in X\}$.  We say $g$ is {\it hyperbolic} if $|g|>0$ and $\Min(g)\ne\emptyset$.  In this case, we have the following facts whose proofs can be found in  Ruane \cite{RuaneDyn}, Theorem 1.2.3 and Theorem 3.2.

\begin{thm}\label{minset} Suppose $g$ is a hyperbolic isometry of a $\cat(0)$ space $X$.  Then
\begin{enumerate}
\item There exists a geodesic line $A_g$ called an {\it axis} on which $g$ acts via translation by $|g|$. The set $\Min(g)$ consists of all points in $X$ which lie on such an axis.  
\item $\Min(g)$ is isometric to $Y\times\mathbb R$ where $Y$ is a closed, convex subset of $X$.  
\item $\bdry \Min(g)$ is homeomorphic to $\Sigma(\bdry Y)$ and $\bdryT \Min(g)$ is isometric to\break $\Sigma (\bdryT Y_g)$ where the suspension points are the endpoints of $A_g$ (and hence any $g$--axis)
\item Suppose $g\in G$ where $G$ acts geometrically on $X$.  Then the centralizer $C(g)$ in $G$ acts geometrically on $\Min(g)$ and $C(g)\slash\langle g\rangle$ acts geometrically on $Y$ (via the projected action).  
\end{enumerate} 
\end{thm}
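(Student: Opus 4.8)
The plan is to read parts (1)--(3) off the parallel-lines and product-boundary material already set up, and to prove part (4) by the standard argument that the centralizer acts cocompactly on $\Min(g)$, which I expect to be the only delicate point. For (1) and (2): the displacement function $x\mapsto d(x,g\cdot x)$ is convex, so $\Min(g)$ is a closed, convex, $g$--invariant subspace of $X$ (hence itself $\cat(0)$), and nonempty by hypothesis. Fixing $x\in\Min(g)$, I would first check that $A_x:=\bigcup_{n\in\mathbb Z}[g^{n}\!\cdot x,\,g^{n+1}\!\cdot x]$ is a geodesic line: for $y$ on $[x,g\cdot x]$ near $g\cdot x$ one has $d(y,g\cdot y)=|g|=d(y,g\cdot x)+d(g\cdot x,g\cdot y)$ (the first equality because $[x,g\cdot x]\subseteq\Min(g)$), so $g\cdot x$ lies on the geodesic $[y,g\cdot y]$ and the concatenation is locally geodesic near $g\cdot x$; by $g$--equivariance and the fact that local geodesics in $\cat(0)$ spaces are geodesics, $A_x$ is a line on which $g$ translates by $|g|$. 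Conversely every point of an axis lies in $\Min(g)$, which gives the description of $\Min(g)$ in (1). The function $t\mapsto d(c(t),c'(t))$ between two axes (suitably parametrized) is convex and $|g|$--periodic, hence constant, so any two axes are parallel and $\Min(g)\subseteq\mathcal P(A_g)$; by \fullref{lines}, $\mathcal P(A_g)\equiv Y_0\times\mathbb R$ with the $\mathbb R$--factor the parallel class of $A_g$. Since $g$ permutes the fibres of this splitting and fixes the two endpoints of $A_g$ in $\bdry X$, it must act by $(y,t)\mapsto(g_1\!\cdot y,\,t+|g|)$ for some isometry $g_1$ of $Y_0$, and then $d((y,t),g\cdot(y,t))^{2}=d(y,g_1\!\cdot y)^{2}+|g|^{2}$ forces $\Min(g)=\Fix(g_1)\times\mathbb R$. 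Setting $Y:=\Fix(g_1)$, which is closed and convex in $Y_0$ hence in $X$, proves (1) and (2) and exhibits $\langle g\rangle$ acting on $\Min(g)=Y\times\mathbb R$ purely by translation of the $\mathbb R$--factor.

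Part (3) is then immediate: $\bdry(Y\times\mathbb R)\cong\Sigma(\bdry Y)$ and $\bdryT(Y\times\mathbb R)\equiv\Sigma(\bdryT Y)$, the two suspension points being the ends of the $\mathbb R$--factor, that is, the endpoints of any $g$--axis. For (4), $C(g)$ preserves $\Min(g)$ (if $hg=gh$ and $x\in\Min(g)$ then $d(h\cdot x,g\cdot h\cdot x)=d(h\cdot x,h\cdot g\cdot x)=|g|$), and since $C(g)\le G$, it acts properly discontinuously and isometrically on the closed invariant subspace $\Min(g)$; the content is that this action is cocompact, which I expect to be the main obstacle.

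To get cocompactness I would argue by contradiction: if it fails, then, $\Min(g)$ being proper, there are $p_n\in\Min(g)$ with $d(p_n,C(g)\cdot p_0)\to\infty$ for a fixed $p_0\in\Min(g)$. Choose $r>0$ with $G\cdot\ball{p_0}{r}=X$ and $\gamma_n\in G$ with $x_n:=\gamma_n^{-1}\!\cdot p_n\in\ball{p_0}{r}$. The element $h_n:=\gamma_n^{-1}g\gamma_n$ satisfies $d(x_n,h_n\!\cdot x_n)=d(p_n,g\cdot p_n)=|g|$, so $d(p_0,h_n\!\cdot p_0)\le 2r+|g|$; by proper discontinuity only finitely many elements of $G$ satisfy this, so on a subsequence $\gamma_n^{-1}g\gamma_n$ is constant, equal to $\gamma_1^{-1}g\gamma_1$ say, which forces $\gamma_n\gamma_1^{-1}\in C(g)$. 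Then $d(p_n,C(g)\cdot p_0)\le d\bigl(p_n,(\gamma_1\gamma_n^{-1})^{-1}\!\cdot p_0\bigr)=d(\gamma_1\!\cdot x_n,p_0)\le r+d(\gamma_1\!\cdot p_0,p_0)$, contradicting $d(p_n,C(g)\cdot p_0)\to\infty$. Hence $C(g)$ acts geometrically on $\Min(g)=Y\times\mathbb R$. Finally, $\langle g\rangle$ is central in $C(g)$ and, by (2), acts only on the $\mathbb R$--factor, so $C(g)/\langle g\rangle$ acts by isometries on $Y$ through the projection $Y\times\mathbb R\to Y$; this action is cocompact because that projection is $C(g)$--equivariant and onto, and proper because, after adjusting each element of $C(g)$ by a power of $g$ so that its $\mathbb R$--translation part lies in $[0,|g|)$, properness of the action on $Y\times\mathbb R$ descends to $Y$. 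This would complete (4).
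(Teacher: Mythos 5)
Your proof is correct, and it fills in something the paper itself does not prove: the paper simply quotes these facts, citing \cite{RuaneDyn} (Theorems 1.2.3 and 3.2) and \cite{BridsonHaefliger} for the relevant background. Your argument is essentially the standard one underlying those citations. For (1)--(3) you recover the Bridson--Haefliger Min-set splitting by first placing $\Min(g)$ inside $\mathcal P(A_g)\equiv Y_0\times\mathbb R$ via \fullref{lines} and then cutting down to $\Fix(g_1)\times\mathbb R$ using the displacement formula; this is a mild repackaging of the usual proof and is sound (the one point needing the canonical nature of the parallel decomposition --- that $g$, and later any $h\in C(g)$, splits as a product isometry with translation on the $\mathbb R$--factor --- you justify correctly, since commuting with a nontrivial translation rules out reflections). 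For (4), the conjugation-plus-properness argument for cocompactness of $C(g)$ on $\Min(g)$, and the descent of properness to $Y$ after normalizing the $\mathbb R$--translation part into $[0,|g|)$, are exactly the arguments of the cited results, so there is nothing to flag.
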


If $g\in G$ where $G$ acts geometrically on a $\cat(0)$ space $X$,
then any infinite order element is a hyperbolic isometry.  This is the
contents of \cite[Proposition 6.10(2), page 233]{BridsonHaefliger}.

Any isometry $g$ of $X$ extends to a homeomorphism of $\bdry X$ and an
isometry of $\bdryT X$.  We denote both of these maps by
$\overline{g}$ even though this is a slight abuse of notation.  It
will be clear which one of these we are talking about in context.
Recall we use $\partial X$ to denote the point set of the boundary,
$\bdry X$ to denote the visual boundary and $\bdryT X$ to denote the
Tits boundary.  A proof of the following can by found as \cite[Theorem
3.2]{RuaneDyn} and also in \cite{Swenson}.

\begin{thm}\label{dynamics} $\Fix(\overline{g})=\bdry \Min(g)$.   
\end{thm}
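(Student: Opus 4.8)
The plan is to prove the two inclusions separately; the containment $\bdry\Min(g)\subseteq\Fix(\overline g)$ will be immediate, and the content lies in the reverse one.

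First I would invoke \fullref{minset}: $\Min(g)$ is a closed, convex, $g$--invariant subset of $X$, and under the splitting $\Min(g)\equiv Y\times\mathbb R$ the isometry $g$ acts as the identity on $Y$ and as translation by $|g|$ on the $\mathbb R$--factor, so $d(x,g\cdot x)=|g|$ for every $x\in\Min(g)$. Then for $\xi\in\bdry\Min(g)$ I would pick a geodesic ray $\rho\co[0,\infty)\to\Min(g)$ representing $\xi$ and note that $d(\rho(t),(g\of\rho)(t))=|g|$ for all $t$; hence $\rho$ and $g\of\rho$ are asymptotic, so $\overline g(\xi)=[g\of\rho]=[\rho]=\xi$ and $\xi\in\Fix(\overline g)$.

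For the reverse inclusion, take $\xi\in\Fix(\overline g)$, fix a basepoint $x_0\in\Min(g)$, and let $c\co[0,\infty)\to X$ be the geodesic ray from $x_0$ with $c(\infty)=\xi$. The key object is the function $\varphi(t)=d\bigl(c(t),(g\of c)(t)\bigr)$, which is the restriction to $c$ of the displacement function $x\mapsto d(x,g\cdot x)$. Two observations combine: (i) $\varphi$ is convex, since the displacement function of an isometry of a $\cat(0)$ space is convex and the distance between corresponding points of two geodesics is convex (\cite{BridsonHaefliger}); and (ii) $\varphi$ is bounded on $[0,\infty)$, because $g\of c$ is a geodesic ray issuing from $g\cdot x_0$ with ideal endpoint $\overline g(\xi)=\xi$, so $c$ and $g\of c$ are rays with a common endpoint and are therefore asymptotic. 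A convex function on $[0,\infty)$ that is bounded above is non-increasing, so $\varphi(t)\le\varphi(0)=d(x_0,g\cdot x_0)=|g|$ for all $t$; combined with $\varphi(t)\ge|g|$ this forces $\varphi\equiv|g|$, i.e.\ $c(t)\in\Min(g)$ for every $t$. Thus $c$ lies in $\Min(g)$ and $\xi=c(\infty)\in\bdry\Min(g)$.

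The one step that needs care is (ii): one must observe that $c$ and $g\of c$ are asymptotic precisely because they share the ideal endpoint $\xi$ --- this is exactly where the hypothesis $\xi\in\Fix(\overline g)$ enters --- and this is what makes $\varphi$ bounded. After that, convexity of $\varphi$ does all the work, and the remainder is bookkeeping with \fullref{minset}.
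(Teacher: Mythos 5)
Your proof is correct. Note that the paper itself gives no argument for this statement, deferring to \cite{RuaneDyn} and \cite{Swenson}; the easy inclusion $\bdry\Min(g)\subseteq\Fix(\overline g)$ (which for that matter needs only the definition of $\Min(g)$, not the splitting) and your convexity-plus-boundedness argument for the reverse inclusion --- restricting the displacement function of $g$ to the ray from a point of $\Min(g)$ to a fixed boundary point and concluding it is identically $|g|$ --- is essentially the standard argument found in those references, so nothing new or problematic is introduced.
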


Finally, we will need two key results from \cite{Swenson} that make our proof work.  The first is Theorem 11 from that paper. 

\begin{thm}\label{swenson1}If an infinite group $G$ acts geometrically on a $\cat(0)$ space $X$, then $G$ contains an element of infinite order.
\end{thm}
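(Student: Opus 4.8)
The plan is to argue by contradiction: assume $G$ is infinite but every element of $G$ has finite order, and derive a contradiction. First I would record the standard consequences of a geometric action. By the Milnor--\v{S}varc lemma $G$ is finitely generated and quasi-isometric to $X$, so $X$ is unbounded --- if it were bounded, properness of the action would force $G$ to be finite --- whence $\partial X\ne\emptyset$ and $\wwbar X$ is a compact metrizable space on which $G$ acts by homeomorphisms. Since each $g\in G$ has finite order, $\langle g\rangle$ has a bounded orbit, so by the Bruhat--Tits fixed point theorem every $g$ is an elliptic isometry; thus the $G$--action on $\partial X$ is generated by elliptic elements, each with a (possibly empty) fixed set at infinity of the form $\partial\Fix(g)$ (the elliptic analogue of \fullref{dynamics}). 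Finally, a proper, unbounded $\cat(0)$ space with a cocompact isometry group always contains a geodesic line --- translate longer and longer geodesic segments so that their midpoints stay in a fixed ball, then pass to an Arzela--Ascoli limit --- a fact I will apply both to $X$ and, later, to a horoball in $X$.

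Next I would dispose of the case in which $G$ has more than one end. If $G$ has two or infinitely many ends, then by Stallings' theorem $G$ splits either as an amalgamated product $A\ast_C B$ with $C\ne A\ne C\ne B$ or as an HNN extension $A\ast_C$, in either case over a finite edge group $C$. But such a group contains an element of infinite order --- in the amalgam the product $ab$ of any $a\in A\setminus C$ with $b\in B\setminus C$ (by the normal form theorem), and in the HNN case the stable letter --- contradicting the assumption that $G$ is torsion. Hence $G$ is one-ended and $\partial X$ is a compact, connected, metrizable space with more than one point.

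The heart of the argument is to locate a point of $\partial X$ fixed by a finite-index subgroup of $G$. Using compactness of $\wwbar X$ together with the $\cat(1)$ geometry of $\bdryT X$ (\fullref{tits}) and the fact that every element of $G$ acts elliptically --- hence with a nonempty, ``round'' fixed set at infinity --- one produces a finite $G$--invariant subset of $\bdryT X$: a $G$--invariant closed subset of $\bdryT X$ of Tits radius less than $\pi/2$ has a unique circumcenter, which is $G$--fixed, and the complementary ``large diameter'' configuration is controlled by analysing the $\pi$--antipodal structure of $\bdryT X$. Replacing $G$ by the (finite-index, still infinite and torsion) stabiliser of a point in this finite set, we may assume $G\cdot\xi=\xi$ for some $\xi\in\partial X$. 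Then the Busemann character $\beta_\xi\co G\to\mathbb R$ is a homomorphism, and since $G$ is torsion it vanishes; hence $G$ preserves every horoball $B$ centred at $\xi$. Choosing the basepoint inside such a $B$, the orbit lies in $B$ and is quasi-dense in $X\supseteq B$, so $G$ acts geometrically on the proper $\cat(0)$ space $B$ (compare \fullref{all}). As $B$ is unbounded it again contains a geodesic line, and iterating the fixed point construction inside $\partial B$ produces, after passing to a further finite-index subgroup, a second fixed point $\xi'$; one more appeal to the $\cat(1)$ geometry of $\bdryT X$ lets us arrange $\angle(\xi,\xi')=\pi$. Now the union $M$ of all geodesic lines joining $\xi$ to $\xi'$ is a closed, convex, $G$--invariant subset of $X$ which, by the parallel-set decomposition (\fullref{lines}), is isometric to $Y\times\mathbb R$ with the $\mathbb R$--fibres exactly these lines; $M$ is quasi-dense, so $G$ acts geometrically on it. But $G$ fixes both ends of the $\mathbb R$--factor, so it acts on that factor through the homomorphism recording translation length, which is trivial because $G$ is torsion; hence $G$ acts trivially on the $\mathbb R$--factor and $G\backslash M\cong(G\backslash Y)\times\mathbb R$ is noncompact, contradicting cocompactness. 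This contradiction proves the theorem.

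The main obstacle is the first step of the last paragraph: producing a finite $G$--orbit --- equivalently, after passing to finite index, a fixed point --- in $\bdryT X$. Everything else is routine $\cat(0)$ bookkeeping (horoballs, quasi-dense convex subsets, parallel sets, and the triviality of homomorphisms from a torsion group to $\mathbb R$), but the fixed point at infinity genuinely combines the cocompactness of the action with the fact that every element of $G$ acts elliptically, and that combination is the delicate point. One must also check that the two applications of this step really yield two boundary points that can be moved to Tits distance $\pi$; the borderline case, in which the relevant $G$--invariant subset of $\bdryT X$ has Tits diameter $\pi$ without attaining it, is where the $\cat(1)$ geometry of $\bdryT X$ must be handled with the most care.
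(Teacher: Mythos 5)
The paper does not actually prove this statement: it is quoted as Theorem 11 of Swenson's paper, whose proof uses the finite-dimensionality of the boundary (Swenson's Theorem 12) together with $\pi$--convergence-type dynamics and an induction on dimension. So your proposal has to stand on its own, and it does not: the central step --- producing a finite $G$--invariant subset of $\bdryT X$, hence after passing to a finite-index subgroup a global fixed point $\xi\in\partial X$, from the assumption that $G$ is infinite torsion --- is asserted rather than proved. The circumcenter argument only applies once you are handed a $G$--invariant closed subset of $\bdryT X$ of Tits radius less than $\pi/2$ (and requires completeness/CAT(1) facts about $\bdryT X$), and the ``complementary large diameter configuration is controlled by analysing the $\pi$--antipodal structure'' is precisely the hard dynamical content of the theorem; nothing in the elliptic fixed-set picture you set up forces such an invariant set to exist. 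You flag this yourself as the main obstacle, but that means the proposal reduces the theorem to an unproven claim of essentially the same depth as the theorem itself, and the same objection applies a second time to ``iterating the fixed point construction'' to get $\xi'$ with $\angle(\xi,\xi')=\pi$.

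Two further points. First, even at Tits distance $\pi$ you do not automatically get a geodesic line in $X$ joining $\xi$ and $\xi'$ (the paper's Remark on the Tits metric only guarantees a line when $Td=\infty$), so the parallel-set step needs a separate argument. Second, the whole second half is unnecessary: once a finite-index subgroup $G_0$ (still acting geometrically) fixes $\xi$ and its Busemann character vanishes, the $G_0$--orbit of a point lies in a single horoball centred at $\xi$; but on a proper cocompact $\cat(0)$ space the Busemann function is unbounded above (use almost extendability of geodesics and convexity), so a horoball is never quasi-dense, contradicting cocompactness immediately. Indeed, your intermediate assertion that the orbit is contained in $B$ and quasi-dense in $X$, so that ``$G$ acts geometrically on $B$,'' is already an impossible configuration --- a sign that the argument should stop there, and that all the genuine difficulty is concentrated in the fixed-point-at-infinity claim you have not established.
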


A subgroup $H$ of a group $G$ acting geometrically on a $\cat(0)$ space $X$ is called {\it convex} if there exists a closed, convex $H$--invariant subset $A$ of $X$ on which $H$ acts geometrically.  In this case the limit set $\Lambda H$ is the same as $\bdry A$.  The following is really the key result we use in the proof.  The proof of this fact from Swenson uses the full strength of a geometric group action on a $\cat(0)$ space.  In particular, the cocompactness of the actions are essential.  This is \cite[Theorem 16]{Swenson}. 

\begin{thm}\label{swenson2}Suppose $H$ and $K$ are convex subgroups of a group $G$ acting geometrically on a $\cat(0)$ space $X$.  Then 
\begin{enumerate}
\item $H\cap K$ is convex.
\item $\Lambda (H\cap K)=\Lambda H\cap\Lambda K$.
\end{enumerate}
\end{thm}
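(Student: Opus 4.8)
The plan is to manufacture an explicit closed convex $(H\cap K)$--invariant subspace $Z$ of $X$, to identify $\partial Z$ with $\Lambda H\cap\Lambda K$, and to show $H\cap K$ acts cocompactly on $Z$; then $Z$ witnesses the convexity of $H\cap K$ and both conclusions follow at once.

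Fix closed convex sets $A,B\subseteq X$ with $A$ being $H$--invariant, $B$ being $K$--invariant, $H$ acting geometrically on $A$ and $K$ acting geometrically on $B$, so that $\Lambda H=\partial A$ and $\Lambda K=\partial B$ by definition of the limit set of a convex subgroup. Fix $a_0\in A$ and $b_0\in B$, put $D=d(a_0,b_0)$, and set $Z=A\cap\bigset{x\in X}{d(x,B)\le D}$. Since $x\mapsto d(x,B)$ is a convex function, $Z$ is closed and convex; it is $(H\cap K)$--invariant because every $\gamma\in H\cap K$ preserves both $A$ and $B$, hence $d(\cdot,B)$; and $a_0\in Z$, so $Z\ne\emptyset$. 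I claim $\partial Z=\partial A\cap\partial B$. For ``$\subseteq$'': a ray in $Z$ converging to $\xi$ stays within $D$ of $B$, so projecting its integer points to $B$ and letting the geodesics from $b_0$ to these projections converge yields, exactly as in the quasi-dense lemma above, a ray in $B$ asymptotic to it; thus $\xi\in\partial B$, and $\xi\in\partial A$ is clear. For ``$\supseteq$'': if $\xi\in\partial A\cap\partial B$, let $c_A\subseteq A$ and $c_B\subseteq B$ be the rays to $\xi$ issuing from $a_0$ and $b_0$; the function $t\mapsto d(c_A(t),c_B(t))$ is convex (convexity of the $\cat(0)$ metric) and bounded (the rays are asymptotic), hence non-increasing, so $d(c_A(t),c_B(t))\le d(a_0,b_0)=D$ for every $t$, which puts $c_A$ in $Z$ and $\xi$ in $\partial Z$.

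Next I would run a pigeonhole argument to show that $\Lambda(H\cap K)=\partial Z$ and that $H\cap K$ is infinite whenever $\partial A\cap\partial B\ne\emptyset$. One inclusion is easy: $(H\cap K)a_0\subseteq A$ and $(H\cap K)b_0\subseteq B$, so any accumulation point of an $(H\cap K)$--orbit in $\partial X$ lies in $\partial A\cap\partial B$. Conversely let $\xi\in\partial A\cap\partial B$ and take the ray $c_A\subseteq Z$ to $\xi$ from $a_0$. For each $n$, cocompactness of the $H$--action on $A$ gives $h_n\in H$ with $d(c_A(n),h_na_0)\le R$; since $d(c_A(n),B)\le D$, cocompactness of the $K$--action on $B$ gives $k_n\in K$ with $d(h_na_0,k_nb_0)\le R+D+R'$; hence $s_n:=h_n^{-1}k_n$ lies in the set $\bigset{g\in G}{d(a_0,gb_0)\le R+D+R'}$, which is finite because $G$ acts properly on $X$. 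On an infinite subset of indices where $s_n$ equals a fixed $s$ one gets $h_nh_m^{-1}=k_nk_m^{-1}\in H\cap K$; since $d(a_0,h_na_0)\ge n-R$, the $h_n$ are eventually distinct, so $H\cap K$ is infinite, and fixing one such index $m_0$ and setting $\gamma_n=h_nh_{m_0}^{-1}\in H\cap K$ we have $\gamma_n\cdot(h_{m_0}a_0)=h_na_0$ within $R$ of $c_A(n)$, so $\gamma_n\cdot(h_{m_0}a_0)\to\xi$ in $\wwbar X$ and $\xi\in\Lambda(H\cap K)$.

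There remains the convexity of $H\cap K$, that is, cocompactness of its action on a closed convex invariant set, and $Z$ is the natural candidate. If $H\cap K$ is finite this is automatic (a finite group of isometries of a complete $\cat(0)$ space fixes the circumcenter of an orbit, a one-point closed convex invariant set), and then $\partial A\cap\partial B=\emptyset$ by the contrapositive of the previous paragraph, so every assertion holds. So assume $H\cap K$ is infinite; showing that it acts cocompactly on $Z$ is the main obstacle, and it is here that the cocompactness of the three actions is essential — properness alone does not suffice, since, e.g., a non-uniform Fuchsian lattice acts properly on $\mathbb H^2=Z$ with $\Lambda=\partial\mathbb H^2$ but not cocompactly, so the fact that the $(H\cap K)$--orbit accumulates on all of $\partial Z$ is not by itself enough. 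The idea is to localize the pigeonhole of the previous paragraph: given $z\in Z$, match $H$--orbit points with $K$--orbit points along the geodesic $[a_0,z]\subseteq Z$ and use the finiteness of the matching set to extract an element of $H\cap K$ moving a fixed basepoint to within a bounded distance of $z$, with the bound independent of $z$; the delicate point is to control the basepoint of that element, so as to conclude that $z$ is near $(H\cap K)\cdot a_0$ rather than merely near $(H\cap K)\cdot z$, and this is exactly where the cocompact ambient action is used. Granting cocompactness, $Z$ is a witness to the convexity of $H\cap K$ and $\Lambda(H\cap K)=\partial Z=\partial A\cap\partial B=\Lambda H\cap\Lambda K$, giving both parts.
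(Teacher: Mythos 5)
A preliminary remark on the comparison: the paper does not prove this statement at all --- it is quoted verbatim as Theorem 16 of Swenson's paper --- so your proposal can only be judged on its own terms. Much of it does hold up. The set $Z=A\cap\set{x\in X}{d(x,B)\le D}$ is closed, convex and $(H\cap K)$--invariant; the identification $\partial Z=\partial A\cap\partial B$ is correct in both directions as you argue; and your pigeonhole argument --- matching $h_n\in H$ and $k_n\in K$ along a ray in $Z$, using properness of the $G$--action to make $\{h_n^{-1}k_n\}$ finite, and extracting $h_nh_m^{-1}=k_nk_m^{-1}\in H\cap K$ --- correctly yields $\Lambda H\cap\Lambda K\subseteq\Lambda(H\cap K)$. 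Combined with the easy reverse inclusion and your treatment of the finite case, this essentially establishes item (2) without ever needing item (1).

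The genuine gap is item (1). Convexity of $H\cap K$ means cocompactness of its action on some closed convex invariant subset, and this is precisely the step you defer with ``granting cocompactness.'' Nothing you prove beforehand implies it: as you note yourself, having the orbit accumulate on all of $\partial Z$ does not give cocompactness, and in general two closed convex sets with the same visual boundary need not even lie at finite Hausdorff distance (a parabola region versus a ray in $\E^2$), so one cannot borrow cocompactness from an abstract witness either. This is exactly the point the paper flags when it says Swenson's proof ``uses the full strength of a geometric group action,'' and it is why the paper cites rather than proves the result. For what it is worth, your own pigeonhole can be upgraded to close the gap: given $z\in Z$, run the matching at the integer points $x_0,\dots,x_n$ of the geodesic $[a_0,z]\subseteq Z$, producing $s_i=h_i^{-1}k_i$ in the finite set $P=\set{g\in G}{d(a_0,gb_0)\le R+R'+D}$. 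If $n$ is large compared to $|P|$ and $R$, some value repeats at indices $i<j$ with $j-i$ large, and $\gamma=h_jh_i^{-1}=k_jk_i^{-1}\in H\cap K$ satisfies $d(a_0,\gamma^{-1}z)=d(\gamma a_0,z)\le d(a_0,h_ia_0)+d(h_ja_0,z)\le d(a_0,z)-(j-i)+2R$, a definite decrease, with $\gamma^{-1}z$ still in $Z$; iterating and composing the resulting elements shows $(H\cap K)\cdot a_0$ is uniformly quasi-dense in $Z$, and cocompactness follows from properness of $X$. Without some such argument the proposal proves (2) but not (1).
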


Finally, we point out an obvious corollary to the Flat Plane Theorem which will be used in the proof of \fullref{key}. 

\begin{corollary}\label{halfplane} Suppose $G$ acts geometrically on the $\cat(0)$ space $X$.  Then $\bdryT X$ cannot be isometric to a segment of length less than or equal to $\pi$.  
\end{corollary}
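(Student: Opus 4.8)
The plan is to argue by contradiction: suppose $\bdryT X$ is isometric to a segment of length $\ell\le\pi$. I would first pin down $\ell=\pi$ and transport the hypothesis to the visual boundary, then produce a subgroup of index at most $2$ that acts trivially on $\partial X$, and finally contradict this using an infinite-order element.

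First I would dispose of degenerate cases: if $G$ is finite then $X$ is bounded and $\bdry X=\emptyset$, which is not a segment, so $G$ is infinite. By \fullref{swenson1} there is an infinite-order $g\in G$; it is a hyperbolic isometry, so it has an axis $A_g$ whose two endpoints $g^+,g^-\in\partial X$ are distinct. Every point of $A_g$ realizes Alexandrov angle $\pi$ between $g^+$ and $g^-$, so $\angle(g^+,g^-)=\pi$ and hence $Td(g^+,g^-)\ge\pi$. Since $\bdryT X$ has diameter $\ell\le\pi$, this forces $\ell=\pi$ with $g^+,g^-$ the two endpoints of the segment $\bdryT X$; in particular the case $\ell<\pi$ is already excluded. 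Next I would note that the identity map $\bdryT X\to\bdry X$ is a continuous bijection from the compact space $\bdryT X$ (a closed interval) onto the Hausdorff space $\bdry X$, hence a homeomorphism; so $\bdry X$ is homeomorphic to an arc, and its two non-cut points are exactly $g^+$ and $g^-$. (This is consistent with the Flat Plane Theorem: an arc contains no circle, so by \fullref{tits}(1) $X$ contains no flat plane and \fullref{flatpl} makes $G$ word hyperbolic --- though I will not need this.)

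The key step is the following. Since each element of $G$ acts on the arc $\bdry X$ by a homeomorphism, it either fixes or interchanges the two non-cut points $g^+,g^-$; let $G_0$ be the kernel of the induced homomorphism $G\to\mathbb Z/2$, so $[G:G_0]\le 2$ and $G_0$ is infinite. For $h\in G_0$ the induced isometry $\overline h$ of $\bdryT X\equiv[g^-,g^+]$ fixes both endpoints of this segment and is therefore the identity; as $\bdryT X$ and $\bdry X$ have the same underlying point set $\partial X$, this means $G_0$ acts trivially on $\partial X$. To finish, pick an infinite-order $h\in G_0$ (using \fullref{swenson1} again); it is a hyperbolic isometry, so by \fullref{dynamics} $\Fix(\overline h)=\bdry\Min(h)$, whence $\bdry\Min(h)=\partial X$, an arc. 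By \fullref{minset} I may write $\Min(h)\equiv Y\times\mathbb R$ with $\bdry\Min(h)$ homeomorphic to $\Sigma(\bdry Y)$ and with $C(h)/\langle h\rangle$ acting geometrically on $Y$. Since a suspension is homeomorphic to an arc only when the suspended space is a single point, $\bdry Y$ is a single point. But a $\cat(0)$ space carrying a geometric group action cannot have a one-point visual boundary: if $C(h)/\langle h\rangle$ is infinite it contains, by \fullref{swenson1}, an infinite-order element whose axis gives two distinct points of $\bdry Y$; if it is finite then $Y$ is bounded and $\bdry Y=\emptyset$. This contradiction establishes the corollary.

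I expect the main obstacle to be the final step --- ruling out an infinite-order isometry that acts as the identity on all of $\partial X$ --- together with being careful that "$G_0$ trivial on $\bdryT X$ as a metric space" really does mean "$G_0$ trivial on the point set $\partial X$," so that the induced homeomorphism of $\bdry X$ is also the identity; the rest is bookkeeping with the results quoted in this section. (An alternative ending, once $\bdry X$ is seen to be an arc, is to invoke the absence of global cut points in the boundary of a one-ended word-hyperbolic group, but the route above stays within the toolkit developed here.)
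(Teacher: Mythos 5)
Your argument is correct, but it is a genuinely different proof from the paper's. The paper handles this corollary in two lines via the Flat Plane Theorem (\fullref{flatpl}): either $X$ is $\delta$--hyperbolic, in which case $\bdryT X$ is discrete and contains no segment, or $X$ contains a flat plane, whose Tits boundary is an isometrically embedded circle of length $2\pi$ (\fullref{tits}(1)), and such a circle cannot embed isometrically in a segment of length at most $\pi$. You avoid the Flat Plane Theorem entirely and instead run the dynamical machinery of the section: the axis of an infinite-order element forces the segment to have length exactly $\pi$ with the axis endpoints as its two ends (since $\angle(g^+,g^-)=\pi$ along an axis and $Td\ge\angle$); isometries of a segment then yield an index-$\le 2$ subgroup $G_0$ acting trivially on $\partial X$; and for an infinite-order $h\in G_0$, \fullref{dynamics} and \fullref{minset} identify the arc $\bdry X$ with $\Sigma(\bdry Y_h)$, forcing $\bdry Y_h$ to be a single point, which is incompatible with the geometric action of $C(h)/\langle h\rangle$ on $Y_h$ (a finite quotient gives empty boundary, an infinite one gives an axis and hence two boundary points, via \fullref{swenson1}). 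The steps check out, nothing you invoke depends on this corollary, so there is no circularity, and you could even skip the second appeal to \fullref{swenson1} (which tacitly uses that the finite-index subgroup $G_0$ still acts geometrically) by noting that $g$ itself fixes $g^{\pm}$ and so already lies in $G_0$. The paper's route buys brevity and uses nothing about the group beyond the dichotomy of \fullref{flatpl}; yours buys independence from the Flat Plane Theorem plus a little extra structural information (the endpoints of the putative segment would have to be endpoints of an axis), at the cost of the heavier \fullref{minset}/\fullref{dynamics}/\fullref{swenson1} toolkit --- and, amusingly, the parenthetical you set aside as unneeded is essentially the paper's entire proof.
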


\begin{proof}  By \fullref{flatpl}, either $X$ is $\delta$--hyperbolic or $X$ contains a flat plane.  If $X$ were $\delta$--hyperbolic, $\bdryT X$ would be discrete and thus would not contain a segment.  Thus $X$ contains a flat plane which implies $\bdryT X$ contains at least a circle.
\end{proof}

We are now ready to prove the Lemma. 

\begin{lemma}\label{key} The suspension points $N$ and $S$ in \fullref{main1} have $Td(N,S)=\pi$. 
\end{lemma}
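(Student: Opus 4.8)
The plan is to establish $Td(N,S)\le\pi$ and $Td(N,S)\ge\pi$ separately; only the second needs real work, and I would prove it by contradiction. The idea is that a Tits geodesic from $N$ to $S$ of length strictly less than $\pi$ would have to be stabilised by every infinite-order element of $G$; from one such element I would manufacture a second, independent hyperbolic element stabilising the same ``suspension line'', and then invoke the observation recorded just before this lemma to reach an absurdity.

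For the upper bound: by \fullref{flatpl} together with Swarup's theorem (as at the beginning of the proof of \fullref{main1}) $X$ is not $\delta$--hyperbolic, so $X$ contains a flat plane $F$. Then $\bdryT F$ is a circle of circumference $2\pi$ isometrically embedded in $\bdryT X$ (\fullref{tits}(1)), and its image in $\bdry X\cong\Sigma C$ is a simple closed curve. Since $\Sigma C$ is the suspension of a totally disconnected set, two of its points lying on distinct suspension arcs are separated by $\{N,S\}$, so every simple closed curve in $\Sigma C$ meets $\{N,S\}$; hence $N,S\in\bdryT F$. As $\bdryT F$ is isometrically embedded, $Td(N,S)$ equals the length of the shorter of the two arcs of $\bdryT F$ cut off by $N$ and $S$, which is at most $\pi$.

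For the lower bound, suppose $\ell:=Td(N,S)<\pi$. By \fullref{tits}(3), $\bdryT X$ is $\cat(1)$ and there is a unique Tits geodesic $\sigma$ from $N$ to $S$, of length $\ell$; since $\sigma$ is injective and meets $\{N,S\}$ only at its endpoints, while $\bdry X\setminus\{N,S\}$ is a disjoint union of open arcs, the image of $\sigma$ in $\bdry X\cong\Sigma C$ lies on a single suspension arc $a_0$. First I would pass to the subgroup of index $\le 2$ in $G$ fixing $N$ and $S$ (harmless: $Td(N,S)$ depends only on $\bdryT X$ and the subgroup still acts geometrically); then for every $g\in G$ the Tits isometry $\overline{g}$ fixes $N$ and $S$, hence fixes $\sigma$ pointwise, so $\sigma\subseteq\Fix(\overline{g})=\bdry\Min(g)$ by \fullref{dynamics}, and in particular $\overline{g}$ fixes $a_0$ setwise. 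Now choose $g\in G$ of infinite order (\fullref{swenson1}); it is hyperbolic, and writing $\Min(g)\equiv Y_g\times\mathbb R$ with axis endpoints $g^{\pm}$ as in \fullref{minset}, the inclusion $\sigma\subseteq\bdry\Min(g)\cong\Sigma(\bdry Y_g)$ forces $\bdry Y_g\ne\emptyset$ (otherwise $\bdry\Min(g)$ would be a two-point set, too small to contain the arc $\sigma$) and $\{g^{+},g^{-}\}\ne\{N,S\}$ (otherwise $N$ and $S$ would be the suspension points of $\bdryT\Min(g)\equiv\Sigma(\bdryT Y_g)$, hence at Tits distance $\pi$ --- since $\bdryT$ of a closed convex subset embeds isometrically --- contrary to $\ell<\pi$); and the same holds for every infinite-order element. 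By \fullref{minset}, the group $C(g)/\gen g$ acts geometrically on $Y_g$, which has nonempty boundary and is therefore infinite, so by \fullref{swenson1} it contains an infinite-order element, which lifts to $h\in C(g)$ of infinite order; a relation $h^m=g^n$ with $m\ne 0$ would project to $\overline{h}^{\,m}=\overline{1}$ in $C(g)/\gen g$, so $g$ and $h$ have no common power, yet both fix $\sigma$ pointwise and hence fix $a_0$. By the observation preceding this lemma, one of $g,h$ then has $N$ and $S$ as its axis endpoints --- contradicting $\{g^{\pm}\}\ne\{N,S\}$ and $\{h^{\pm}\}\ne\{N,S\}$. So $\ell\ge\pi$, and with the upper bound $Td(N,S)=\pi$.

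The serious point is that observation itself: \emph{if $g$ and $h$ are infinite-order elements of $G$ with no common power, both fixing a common suspension arc $a_0$ of $\Sigma C$, then one of them has the suspension points as its axis endpoints.} I would prove it using \fullref{swenson2}: the centralizers $C(g)$ and $C(h)$ are convex subgroups with $\Lambda C(g)=\bdry\Min(g)$ and $\Lambda C(h)=\bdry\Min(h)$, both containing $a_0$, so $C(g)\cap C(h)$ is convex with limit set $\Lambda C(g)\cap\Lambda C(h)\supseteq a_0$, hence infinite, hence (by \fullref{swenson1}) contains an infinite-order element $k$ whose axis lies in the convex set realising the intersection and so has both endpoints in $\bdry\Min(g)\cap\bdry\Min(h)$. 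The hard part will be to locate $N$, $S$ and the axis endpoints $g^{\pm}$, $h^{\pm}$, $k^{\pm}$ inside the three product structures $Y_g\times\mathbb R$, $Y_h\times\mathbb R$, $Y_k\times\mathbb R$ (using \fullref{minset}, and in particular the projected geometric actions on the $Y$--factors) and to show that the hypothesis of ``no common power'' excludes every configuration except one in which some axis runs from $N$ to $S$; this bookkeeping, which uses cocompactness essentially through \fullref{swenson2}, is precisely the delicate step the referee flagged.
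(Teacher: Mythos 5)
Your upper bound and the initial reductions (passing to the index-$\le 2$ subgroup fixing $N$ and $S$, the pointwise fixing of the unique Tits geodesic $\sigma$, the inclusion $\sigma\subseteq\Fix(\overline{g})=\bdry \Min(g)$ via \fullref{dynamics}, and $\partial Y_g\ne\emptyset$) agree with the paper. But the heart of your argument is an appeal to the ``observation preceding the lemma,'' and that observation is not a result of the paper: it is motivational prose, never proved, and as you state it (two infinite-order elements with no common power fixing a common suspension arc force one of them to have the suspension points as axis endpoints) it is false. Take the model case $X=T\times\mathbb R$ with $T$ a tree and $G=F\times\mathbb Z$, and let $g=(f,1)$, $h=(f,2)$ for $f\in F$ of infinite order: these have no common nontrivial power, both fix pointwise the two suspension arcs over $f^{\pm\infty}$ (since $\Fix(\overline{g})=\bdry\Min(g)=\bdry(A_f\times\mathbb R)$, and likewise for $h$), yet the axis of each is a diagonal line in the flat $A_f\times\mathbb R$, so neither has $\{N,S\}$ as its endpoints. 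Any correct version of the observation must use the standing assumption $Td(N,S)<\pi$ throughout, and establishing it is exactly the content you defer as ``bookkeeping.''

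That deferred bookkeeping is where the paper's proof actually lives: having fixed an infinite-order $g$ with $[N,S]\subseteq\bdryT\Min(g)$, one runs a case analysis on the number of ends of $Y_g$ (one, two, or infinitely many, via Hopf's theorem and the fact that $\bdry Y_g$ is $0$--dimensional). One end makes $\Min(g)$ a half-plane, impossible by \fullref{halfplane}; two ends makes $\bdryT\Min(g)$ a circle (via \fullref{circle} and \fullref{compact}), and then a conjugate $g'=hgh^{-1}$ with $\Min(g')\ne\Min(g)$ together with \fullref{swenson2} produces a convex subgroup acting geometrically on a set whose Tits boundary is a segment of length at most $\pi$ containing $[N,S]$, again impossible by \fullref{halfplane}; infinitely many ends forces the suspension points of $\bdry\Min(g)\cong\Sigma(\bdry Y_g)$ to be $N$ and $S$ (every other point of $\Sigma C$ admits only two arc-germs, being a point of non-local connectivity), so $N$ and $S$ are the endpoints of a geodesic line and $Td(N,S)=\pi$, contradicting $\ell<\pi$. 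Your sketch via $C(g)\cap C(h)$ reaches none of these steps --- in particular nothing in it rules out the two-ended case or pins the suspension points of $\bdry\Min(g)$ to $\{N,S\}$ --- so the proposal has a genuine gap at precisely the point the referee flagged. (A minor side remark: to see that $\{g^{\pm\infty}\}=\{N,S\}$ would force $Td(N,S)=\pi$ you do not need any isometric-embedding claim for Tits boundaries of convex subsets; the two endpoints of any geodesic line are already at angular distance $\pi$ in $X$.)
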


\begin{proof} We know $Td(N,S)\le\pi$ since $N$ and $S$ lie in the boundary of a common flat.  Suppose $Td(N,S)=\alpha <\pi$.  Using \fullref{tits} item (3), there is a unique segment in $\bdryT X$ between $N$ and $S$ of length $\alpha$.   Since the entire group $G$ must fix $N$ and $S$, $G$ must fix this segment in $\bdryT X$.  Thus either $N$ and $S$ are each fixed by all of $G$ or they are interchanged.  In the second case, we can pass to a subgroup of index 2 in $G$ which fixes both $N$ and $S$.  Thus we can assume each $g\in G$ fixes both $N$ and $S$. 

Using \fullref{swenson1}, we know there exists $g\in G$ of infinite order.  Consider $\Min(g)=Y_g\times\mathbb R$.  By \fullref{minset}, this a closed, convex (and thus $\cat(0)$)  subset of $X$ on which the subgroup $C(g)$ acts geometrically with $C(g)\slash\langle g\rangle$ acting geometrically on $Y_g$.   Let $\{g^{\pm\infty}\}$ denote the endpoints in $\bdry X$ of an axis for $g$.  From \fullref{dynamics}, we know $\bdry \Min(g)$ is exactly the fixed point set of $\overline{g}$ viewed as a homeomorphism acting on $\bdry X$.  We also know $\bdry \Min(g)=\Sigma\bdry Y_g$ and $\bdryT \Min(g)=\Sigma(\bdryT Y_g)$ where $\{g^{\pm\infty}\}$ are the suspension points.    

Since $[N,S]$ is fixed by $\overline{g}$, we must have $[N,S]\subset\bdryT \Min(g)$.   In particular, this implies $\partial Y_g\ne\emptyset$ just as a set.  

Since $C(g)\slash\langle g\rangle$ acts geometrically on $Y_g$ and
$\partial Y_g\ne\emptyset$, $Y_g$ has 1,2, or infinitely many ends by
a theorem of Hopf which can be found on \cite[page
146]{BridsonHaefliger}.  Since $\bdry X$ is 1--dimensional, we must
have $\bdry Y_g$ 0--dimensional, thus $\bdry Y_g$ has 1, 2, or
infinitely many points.

If $Y_g$ has one end, then we would have $\Min(g)$ is isometric to a half plane.  This is impossible by \fullref{halfplane}. 

If $Y_g$ has 2 ends $\bdry \Min(g)$ a circle.  From \fullref{circle}, we must have $\Min(g)$ isometric to $\mathbb E^2\times Z$ where $Z$ is compact since $\bdryT \Min(g)$ contains $[N,S]$ (ie, is not discrete).  Thus $\bdryT \Min(g)$ is isometric to a circle as well.  

If each $h\in G$ were to leave $\Min(g)$ invariant, we would have $\bdry \Min(g)=\bdry X$ by \fullref{all}.  But then $\bdry \Min(g)$ is not a circle as assumed for this case.  Thus let $g'=hgh^{-1}$ an element of infinite order in $G$ with $\Min(g')\ne \Min(g)$.  As for $g$, we can deduce that  $\bdryT \Min(g')$ is a circle containing $[N,S]$.  In particular $\bdryT \Min(g)\cap \bdryT \Min(g')$ contains $[N,S]$.  
Since these are distinct circles, the can intersect in at most a segment of length $\pi$.  

Using \fullref{swenson2}, we have that $K=C(g)\cap C(g')$ is a convex subgroup acting on a $\cat(0)$ subset $A$ of $X$.  We also know $\bdryT A=\bdryT \Min(g)\cap\bdryT \Min(g')$ is a segment of length less than or equal to $\pi$ that contains $[N,S]$.   Since $K$ acts geometrically on $A$, $K$ must contain an element $k$ of infinite order with $C_K(k)$ acting geometrically on $\Min(k)$.  But now $\Min(k)=Y_k\times\mathbb R$ must have $\bdryT \Min(k)$ is a segment of length less than or equal to $\pi$.  Again using \fullref{halfplane}, this is impossible.  

Thus $Y_g$ has infinitely many ends and $\{g^{\pm\infty}\}=\{N,S\}$.  Indeed, $N$ and $S$ are the only points in $\bdry X$ which could possibly be the suspension points for $\bdry \Min(g)$ since all other points of $\bdry X$ are points of non-local connectivity.  But then $N$ and $S$ are the end points of a geodesic line which gives $Td(N,S)=\pi$ as needed.  
\end{proof}

\section{Group theory consequences}

The following theorem is due to M Bridson.  

\begin{thm}\label{algebra} Suppose $X$ is a proper $\cat(0)$ space and $\textrm{Isom}(X)$ has discrete orbits. If a finitely generated group $G$ acts properly discontinuously by semi-simple isometries on $X\times\mathbb R$, then there exists a subgroup $G_0$, finite index in $G$, with $G_0\cong K\times\mathbb Z$ (or just $K$) where $K$ acts properly on $X$.  
\end{thm}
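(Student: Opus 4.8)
\medskip\noindent\textbf{Proof proposal.} The plan is to first force the action to respect the product decomposition, and then to locate a central copy of $\mathbb Z$ in $G$ and peel it off using \fullref{central}.

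\emph{Step 1: reduce to a product action.} I would first observe that $\Isom(X)$ having discrete orbits means $X$ has no nontrivial Clifford translation --- such an $\alpha$ would have $\Min(\alpha)=X$, hence $X\cong Y\times\mathbb R$ by \fullref{minset}, and then the translations of this $\mathbb R$--factor would form a subgroup of $\Isom(X)$ with non--discrete orbits. Consequently the Clifford translations of $X\times\mathbb R$ are exactly $\{1\}\times\mathbb R$ (a larger group would, by \fullref{minset} together with cancellation of an $\mathbb R$--factor, produce a Clifford translation of $X$; see \cite{BridsonHaefliger} for products of $\cat(0)$ spaces), and since this subgroup is normal in $\Isom(X\times\mathbb R)$, every isometry preserves the splitting, so $\Isom(X\times\mathbb R)=\Isom(X)\times\Isom(\mathbb R)$. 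Replacing $G$ by $G_1:=G\cap(\Isom(X)\times\mathbb R)$, of index at most $2$ in $G$, it suffices to treat $G_1$; I will write $p\co G_1\to\Isom(X)$ and $\rho\co G_1\to\mathbb R$ for the coordinate homomorphisms.

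\emph{Step 2: find a central $\mathbb Z$.} Note that $N:=\ker\rho$ acts properly discontinuously on $X$ (project a compact slab $C\times\{0\}$), that $T:=\ker p=G_1\cap(\{1\}\times\mathbb R)$ is central in $G_1$ and acts properly discontinuously on each line $\{x\}\times\mathbb R$, so $T\cong\mathbb Z$ or $T=\{1\}$, and that $G_1$ is discrete and closed in $\Isom(X)\times\mathbb R$ (from proper discontinuity). Now I distinguish cases. If $T\cong\mathbb Z$, let $z$ be a generator. If $T=\{1\}$ and $p(G_1)$ is discrete in $\Isom(X)$, then $G_1\cong p(G_1)$ already acts properly discontinuously on the proper space $X$, and we are done with $G_0:=G_1$ (the ``just $K$'' alternative). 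If $T=\{1\}$ and $p(G_1)$ is not discrete, I would take distinct $g_k\in G_1$ with $p(g_k)\to 1$; discreteness of the orbit $\Isom(X)\cdot x_0$ makes $p(g_k)$ fix a chosen basepoint $x_0$ for $k$ large, and discreteness of $G_1$ plus proper discontinuity on $X\times\mathbb R$ forces $|\rho(g_k)|\to\infty$ (otherwise the $g_k$ subconverge in $\Isom(X)\times\mathbb R$). For fixed $g\in G_1$ one has $\rho([g,g_k])=0$ and $p([g,g_k])\to 1$; since $p(N)$ is discrete, $[g,g_k]=1$ for $k$ large, and as $G_1$ is finitely generated some $z:=g_k$ commutes with all of $G_1$. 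This $z$ has infinite order ($\rho(z)\ne 0$), fixes $x_0$ in the $X$--coordinate, hence translates $\{x_0\}\times\mathbb R$ and is a hyperbolic isometry of $X\times\mathbb R$.

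\emph{Step 3: peel off $\langle z\rangle$ and finish.} In the two surviving cases $\langle z\rangle\cong\mathbb Z$ is central in $G_1$ and acts faithfully by hyperbolic isometries, so \fullref{central} gives a finite--index $H\le G_1$ with $H=K\times\langle z\rangle\cong K\times\mathbb Z$; in both cases $p(z)$ is elliptic (it is trivial if $z\in T$, and fixes $x_0$ otherwise). Since $K\cap T=\{1\}$, $p$ is injective on $K$, so I only need $p(K)$ to be discrete in $\Isom(X)$. If it were not, running Step~2 for $K$ (again finitely generated, acting properly discontinuously by semisimple isometries on $X\times\mathbb R$) would yield an infinite--order $z'\in K$, central in $K$, with $p(z')$ elliptic; but then $p(z)$ and $p(z')$ commute and are elliptic, so they fix a common point $x_2\in X$ (restrict the elliptic $p(z)$ to the nonempty closed convex fixed set of $p(z')$, where it again fixes a point), whence $\langle z,z'\rangle\cong\mathbb Z^2$ preserves $\{x_2\}\times\mathbb R$ and acts there properly discontinuously by translations --- so $\rho$ is injective on $\langle z,z'\rangle$ (it is torsion-free) with discrete image, which is absurd since discrete subgroups of $\mathbb R$ are cyclic. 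Hence $p(K)$ is discrete, $K$ acts properly discontinuously on $X$, and $G_0:=H\cong K\times\mathbb Z$ (of finite index in $G$) is as required.

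\emph{Where the difficulty lies.} The crux is Step~1 together with the construction of $z$ in Step~2: the hypothesis that $\Isom(X)$ has discrete orbits is used essentially both to kill any Euclidean factor of $X$ and --- through discreteness of $p(N)$ and of the orbit of $x_0$ --- to upgrade non--discreteness of $p(G_1)$ to a genuine central $\mathbb Z$; dropping it makes the statement false (take $X=\mathbb H^2$). Verifying in Step~3 that the complement $K$ actually acts properly on $X$ is the remaining delicate point.
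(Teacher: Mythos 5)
Your proof is correct, but it reaches \fullref{algebra} by a genuinely different route than the paper. The paper simply writes each $g\in G$ as $(g_X,t_g)$ (taking compatibility with the splitting for granted) and argues: if the induced action on $X$ is not proper, discreteness of $\Isom(X)$--orbits yields an infinite stabilizer $\Stab(x)$, which acts properly on the line $L=\{x\}\times\mathbb R$ and hence contains a translation $\gamma$; since each generator $g_i$ of the orientation-preserving subgroup carries $L$ to a parallel line, $d(\gamma^{-n}g_i\gamma^n(x),x)=d(g_i(x),x)$ for all $n$, so properness forces a power $\gamma^R$ to be central, and \fullref{central} is applied to $\langle\gamma^R\rangle$; the paper then asserts that the complementary factor $K$ acts trivially on the $\mathbb R$--coordinate, a point that really leans on the construction inside the proof of \fullref{central} in Bridson--Haefliger rather than on its statement. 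You instead produce the central $\mathbb Z$ by a case analysis on $T=\ker p$ and on discreteness of $p(G_1)$, via convergence in the locally compact group $\Isom(X)\times\mathbb R$: discrete orbits make the limiting $X$--parts elliptic at $x_0$, discreteness of $p(\ker\rho)$ kills the commutators $[g,g_k]$, and finite generation then gives a central element of infinite order; and where the paper asserts triviality of $K$ on the second factor, you prove directly that $p(K)$ is discrete by the $\mathbb Z^2$--in--a--line contradiction (two commuting infinite-order elements with elliptic $X$--parts would give $\mathbb Z\times\mathbb Z$ acting properly by translations on a vertical line), which is a legitimate and self-contained way to finish and in fact patches a step the paper glosses over. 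Both arguments funnel through \fullref{central} and both use the discrete-orbit hypothesis essentially; what yours buys is reliance only on the stated form of \fullref{central}, at the cost of the case analysis and of your Step 1, which is the one soft spot: the claim that every isometry of $X\times\mathbb R$ preserves the splitting (equivalently, that the Clifford translations are exactly the vertical ones) is justified only by a sketchy ``cancellation of an $\mathbb R$--factor'' remark and deserves a real argument --- though the paper assumes this product form without any comment at all, so you are, if anything, more careful on that point.
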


\begin{proof} Any $g\in G$ acts on $X\times\mathbb R$ via $g=(g_X,t_g)$ where $g_X\in\textrm{Isom}(X)$ and $t_g\in\textrm{Isom}(\mathbb R)$.  Consider the action of $G$ on the $X$ factor given by $g\cdot x=g_X\cdot x$.  If this action is proper, then we are done by taking $G_0=G$.  If this action is not proper, then there is an infinite stabilizer $\textrm{Stab}(x)$ (since $\textrm{Isom}(X)$ has discrete orbits).  

This group $\textrm{Stab}(x)$ must act properly on $\{x\}\times\mathbb R$ since $G$ acts properly on $X\times\mathbb R$.  Since Stab$(x)$ acts properly on $L:=\{x\}\times\mathbb R$ and is
infinite, it contains a cyclic subgroup $\langle \gamma\rangle$ of finite
index where $\gamma$ acts as a translation.

Let $G^+=\langle g_1,\dots,g_l\rangle$ be the subgroup of index
at most 2 in $G$ whose action on the second factor of $X\times\mathbb R$
preserves the orientation.  

For each $g_i$ and all $p,q\in L$ we have $d(g_i(p), p) = d(g_i(q),q)$ since $L$ must go to a line parallel to $L$ under the action of any isometry.  
In particular, letting $p_n=\gamma^n(x)$,
for all $n\in\mathbb Z$ we have: 
$$
d(\gamma^{-n}g_i\gamma^n(x), x) = 
d(g_i(p_n),p_n) = d(g_i(x),x).
$$
Since the action of $G$ is proper, the set $\{\gamma^{-n}g_i\gamma^n
\mid n\in\mathbb Z\}$
must be finite. Hence, for $i=1,\dots,l$ there exists $r_i\neq 0$ such
that
$\gamma^{-r_i}g_i\gamma^{r_i}=g_i$. Thus if we let $R=r_1\dots r_l$,
then $\gamma^R$ commutes with
each of the generators of $G^+$. We can now apply \fullref{central}
to split a subgroup of finite index in $G^+$ as $K\times
\langle\gamma^R\rangle$,
where $K$ acts trivially on the the second factor of $X\times\mathbb R$
and hence
properly on the first factor.
\end{proof}

\begin{corollary}\label{geom} If $G$ acts geometrically on a $\cat(0)$ space $X\times\mathbb R$ where $\textrm{Isom}(X)$ has discrete orbits, then $G$ contains a subgroup of finite index $G_0$ of the form $K\times\mathbb Z$ where $K$ acts geometrically on $X$.
\end{corollary}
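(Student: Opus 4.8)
The plan is to feed the geometric action of $G$ on $X\times\mathbb R$ into \fullref{algebra} and then to upgrade the word ``properly'' in that theorem's conclusion to ``geometrically''.

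First I would verify the hypotheses of \fullref{algebra}. A geometric action is by definition properly discontinuous and cocompact, so $G$ is finitely generated and quasi-isometric to $X\times\mathbb R$ (\cite{BridsonHaefliger}); and since the action is cocompact on a complete $\cat(0)$ space, every element of $G$ is semi-simple --- infinite-order elements being hyperbolic by \cite[Proposition~6.10(2)]{BridsonHaefliger}, and finite-order elements being elliptic since they fix a point. As $X$ is proper so is $X\times\mathbb R$, and $\Isom(X)$ has discrete orbits by hypothesis, so \fullref{algebra} yields a finite-index subgroup $G_1\le G$ with either $G_1\cong K\times\mathbb Z$ or $G_1\cong K$, where $K$ acts properly on the $X$--factor. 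From the proof of \fullref{algebra} I would retain three facts: the $G$--action on $X\times\mathbb R$ is of the split form $g=(g_X,t_g)\in\Isom(X)\times\Isom(\mathbb R)$; in the product case $K$ is constructed so as to act trivially on the $\mathbb R$--factor; and the $\mathbb Z$--factor is generated by a power $\sigma=\gamma^R$ of an element $\gamma$ lying in the stabiliser of a point of $X$, so that $\sigma_X$ is elliptic.

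Next I would rule out the case $G_1\cong K$. There $K=G_1$ acts properly on $X$, and it also acts cocompactly on $X$, because the projection $X\times\mathbb R\to X$ is $G_1$--equivariant, so the image of a compact fundamental domain is a compact set whose $G_1$--translates exhaust $X$. Then $G_1$ would be quasi-isometric both to $X$ and to $X\times\mathbb R$, which is impossible: for instance, if $X$ is $\delta$--hyperbolic then $G_1$ is word hyperbolic, whereas $X\times\mathbb R$ contains a flat plane --- an axis in $X$ of an infinite-order element of $G_1$ crossed with $\mathbb R$ --- contradicting the Flat Plane Theorem (\fullref{flatpl}) applied to the $G_1$--action on $X\times\mathbb R$; and in general one compares a quasi-isometry invariant that strictly increases under $\times\mathbb R$ (the number of ends, or the asymptotic dimension, which is finite here since $X$ is a proper cocompact $\cat(0)$ space). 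So we are in the case $G_1\cong K\times\mathbb Z$.

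It remains to check that this $K$ acts geometrically on $X$. Properness is immediate: $K$ leaves $X\times\{0\}$ invariant (it acts trivially on the $\mathbb R$--factor) and acts there as the restriction of the properly discontinuous $G_1$--action. For cocompactness, the equivariant-projection argument shows $G_1$ acts cocompactly on $X$, so a $G_1$--orbit $G_1\cdot x_0$ is quasi-dense in $X$; but $\sigma_X$ is elliptic, so $\langle\sigma\rangle\cdot x_0$ is bounded, and since $G_1=K\times\langle\sigma\rangle$ with commuting factors one has $G_1\cdot x_0=K\cdot(\langle\sigma\rangle\cdot x_0)$ contained in a bounded neighbourhood of $K\cdot x_0$; hence $K\cdot x_0$ is already quasi-dense in $X$ and $K$ acts cocompactly on $X$. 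Thus $G_0:=G_1\cong K\times\mathbb Z$ is the required finite-index subgroup. The step I expect to need the most care is this last one --- pulling an honest cocompact action of $K$ on $X$ out of the cocompact action of $K\times\mathbb Z$ on $X\times\mathbb R$ --- and the idea that makes it go through is that the extra $\mathbb Z$ is generated by an element with bounded orbits on $X$, so it does not register in the coarse geometry of $X$; ruling out the $G_1\cong K$ alternative is a lesser technical point.
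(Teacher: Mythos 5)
Your overall route is the same as the paper's: feed the geometric action on $X\times\mathbb R$ into \fullref{algebra}, rule out the ``just $K$'' alternative, and then upgrade properness of $K$ on $X$ to cocompactness by projecting a compact fundamental domain and using that the generator of the $\mathbb Z$--factor is elliptic on the $X$--factor. That last step is fine (indeed more detailed than the paper, which asserts cocompactness of $K$ in one line), and your verification of the hypotheses of \fullref{algebra} is fine.

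The genuine gap is in how you rule out the case $G_1\cong K$, i.e.\ in showing that $G_1$ cannot be quasi-isometric to both $X$ and $X\times\mathbb R$ (equivalently, that $G_1$ cannot be quasi-isometric to $G_1\times\mathbb Z$). Your hyperbolic special case via the Flat Plane Theorem (\fullref{flatpl}) works, but it does not cover general $X$, and your ``in general'' argument does not stand: the number of ends is \emph{not} an invariant that strictly increases under crossing with $\mathbb R$ --- it typically collapses (e.g.\ two ends or infinitely many ends become one), so it cannot distinguish $X$ from $X\times\mathbb R$; and the asymptotic-dimension route assumes both that a proper cocompact $\cat(0)$ space has finite asymptotic dimension (not a known general fact) and that asdim strictly increases under taking a product with $\mathbb R$ (a Morita-type exactness statement that is not a general theorem --- the product formula gives only an upper bound). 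This is exactly the point the paper treats as nontrivial: \fullref{ross} supplies the missing argument by combining Swenson's finite-dimensionality of $\bdry X$, the Geoghegan--Ontaneda computation showing $H^{d+1}(G,\mathbb ZG)\ne 0$ and $H^{n}(G,\mathbb ZG)=0$ for $n>d+1$ where $d=\dim\bdry X$, the degree shift for $G\times\mathbb Z$, and Gersten's quasi-isometry invariance of $H^{*}(G,\mathbb ZG)$. Without this (or some equally rigorous substitute), your dichotomy step, and hence the corollary, is not established for general $X$.
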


\begin{proof}  As above, consider the induced action of $G$ on the $X$ factor.  If this action is proper, then $G$ would act geometrically on $X$ as well as on $X\times\mathbb R$.  Indeed, if the $G$ translates of a compact set $C$ in $X\times\mathbb R$ cover $X\times\mathbb R$, then the $G$ translates of the projection of $C$ onto $X$ will cover $X$ under the induced $G$--action on $X$.  If $G$ acts geometrically on $X$, then we could build a geometric action of $G\times\mathbb Z$ on $X\times\mathbb R$ using the product action.  This would force $G$ to be quasi-isometric to $G\times\mathbb Z$.  This cannot happen because these two groups have different cohomlogical properties.  See the next lemma for details.  

Now by the previous theorem $G$ contains a subgroup of finite index of the form $K\times\mathbb Z$ with $K$ acting properly on $X$.  It follows that $K$ must also act cocompactly on $X$ since $G$ acts cocompactly on $X\times\mathbb R$.
\end{proof}

\begin{lemma}\label{ross} Suppose $G$ acts geometrically on a $\cat(0)$ space $X$.  Then $G$ and $G\times\mathbb Z$ cannot be quasi-isometric.  
\end{lemma}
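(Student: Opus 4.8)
The plan is to pit a quasi-isometry invariant against a product formula that it cannot satisfy. The invariant I would use is the cohomological dimension over $\mathbb Q$: by a theorem of Gersten, $\mathrm{cd}_{\mathbb Q}$ is a quasi-isometry invariant within the class of groups of type $FP$ over $\mathbb Q$. I will check that $G$ and $G\times\mathbb Z$ both lie in this class and that $\mathrm{cd}_{\mathbb Q}(G\times\mathbb Z)=\mathrm{cd}_{\mathbb Q}(G)+1$; then a quasi-isometry between them would give $\mathrm{cd}_{\mathbb Q}(G)=\mathrm{cd}_{\mathbb Q}(G)+1$, which is absurd.

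Step one is the finiteness input. Since $G$ acts geometrically on the proper $\cat(0)$ space $X$, the space $X$ is contractible and --- as is automatic for a proper $\cat(0)$ space carrying a cocompact isometric action --- finite dimensional, and the action is proper and cocompact with finite cell stabilizers. Over $\mathbb Q$, a finite subgroup $F\le G$ makes $\mathbb Q[G/F]$ a projective $\mathbb QG$--module (Maschke's theorem), so the cellular chain complex of a finite-dimensional, cocompact, contractible $G$--CW complex extracted from the action on $X$ is a resolution of $\mathbb Q$ of finite length by finitely generated projective $\mathbb QG$--modules. Hence $G$ is of type $FP$ over $\mathbb Q$ with $n:=\mathrm{cd}_{\mathbb Q}(G)\le\dim X<\infty$, and $n\ge 1$ because $G$ is infinite. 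Applying the same reasoning to the geometric action of $G\times\mathbb Z$ on $X\times\mathbb R$ shows $G\times\mathbb Z$ is of type $FP$ over $\mathbb Q$ too.

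Step two is the degree shift. Splicing a length-one free $\mathbb Q[\mathbb Z]$--resolution of $\mathbb Q$ with a finite projective $\mathbb QG$--resolution of $\mathbb Q$ gives a finite projective $\mathbb Q[G\times\mathbb Z]$--resolution of $\mathbb Q$; because we are over the field $\mathbb Q$ there are no $\mathrm{Tor}$ corrections, and one reads off
$$H^k\bigl(G\times\mathbb Z;\ \mathbb Q[G\times\mathbb Z]\bigr)\ \cong\ H^{k-1}\bigl(G;\ \mathbb QG\bigr)\qquad(k\ge 0).$$
For a group $\Gamma$ of type $FP$ over $\mathbb Q$ one has $\mathrm{cd}_{\mathbb Q}(\Gamma)=\sup\{\,k:H^k(\Gamma;\mathbb Q\Gamma)\ne 0\,\}$; with $\Gamma=G$ this gives $H^n(G;\mathbb QG)\ne 0$, whence $H^{n+1}(G\times\mathbb Z;\mathbb Q[G\times\mathbb Z])\ne 0$ while $H^k$ vanishes for $k>n+1$, so $\mathrm{cd}_{\mathbb Q}(G\times\mathbb Z)=n+1$.

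Step three is the conclusion: if $G$ were quasi-isometric to $G\times\mathbb Z$, then Gersten's invariance of $\mathbb Q$--cohomological dimension among $FP$--over--$\mathbb Q$ groups would force $n=\mathrm{cd}_{\mathbb Q}(G)=\mathrm{cd}_{\mathbb Q}(G\times\mathbb Z)=n+1$, a contradiction. The step that I expect to need the most care is the first one --- knowing that $X$ is finite dimensional and turning the geometric action into an honest finite-length projective $\mathbb QG$--resolution of $\mathbb Q$ --- because without finite $\mathrm{cd}_{\mathbb Q}(G)$ the argument collapses (for a general group of type $FP_\infty$, such as Thompson's group $F$, the cohomology $H^*(-;\mathbb Q[-])$ may vanish in every degree). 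An essentially equivalent alternative is to argue directly with the compactly supported cohomology $H^*_c(X;\mathbb Q)$, a coarse invariant satisfying $H^*_c(X\times\mathbb R;\mathbb Q)\cong H^{*-1}_c(X;\mathbb Q)$ and vanishing in degrees above $\dim X$: a quasi-isometry $G\to G\times\mathbb Z$ makes $X$ quasi-isometric to $X\times\mathbb R$, which via the shift and the dimension bound forces $H^*_c(X;\mathbb Q)=0$, contradicting $H^*_c(X;\mathbb Q)\cong H^*(G;\mathbb QG)\ne 0$.
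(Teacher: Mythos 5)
Your overall strategy is the same as the paper's: play a quasi-isometry--invariant cohomological quantity (cohomology with group-ring coefficients, equivalently a cohomological dimension, with Gersten supplying the invariance) against the degree shift coming from crossing with $\mathbb Z$. Steps two and three of your argument are fine. The genuine gap is in Step one, and it sits exactly at the point you yourself flag as delicate. The claim that a proper $\cat(0)$ space carrying a cocompact isometric action is ``automatically'' finite dimensional is false: the Hilbert cube $Q=\prod_n[0,1/n]\subset\ell^2$ is a compact convex subset of Hilbert space, hence a proper $\cat(0)$ space of infinite covering dimension, and $\mathbb Z$ acts geometrically on $Q\times\mathbb R$. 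So finite dimensionality of $X$ cannot be assumed, and with it goes your finite-length projective $\mathbb QG$--resolution ``extracted from the action on $X$'' --- note also that $X$ carries no preferred $G$--CW structure, so even granting finite dimension you would still need a construction (for instance the nerve of the cover by balls $B(gx_0,r)$: nonempty intersections of such balls are convex, hence contractible, and properness plus cocompactness bound the multiplicity, so the nerve is a finite-dimensional, cocompact, contractible $G$--complex with finite stabilizers, which does yield type $FP$ over $\mathbb Q$ and finite $\mathrm{cd}_{\mathbb Q}(G)$). Your alternative via $H^*_c(X;\mathbb Q)$ has the same defect, since it again invokes ``vanishing in degrees above $\dim X$.''

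This is precisely where the paper routes around the problem without ever using the dimension of $X$: it quotes Swenson's theorem that $\bdry X$ has finite covering dimension $d$, and then the Geoghegan--Ontaneda theorem, which for a geometric action gives $H^{d+1}(G,\mathbb ZG)\ne 0$ while $H^{n}(G,\mathbb ZG)=0$ for $n>d+1$; combined with the shift $H^{k+1}(G\times\mathbb Z,\mathbb Z(G\times\mathbb Z))\cong H^{k}(G,\mathbb ZG)$ and Gersten's quasi-isometry invariance, this yields the contradiction. To complete your version you must either prove the finiteness input honestly (e.g.\ by the nerve construction indicated above) or import the Swenson and Geoghegan--Ontaneda results as the paper does; as written, the key finiteness assertion rests on a false statement about $\cat(0)$ spaces.
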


\begin{proof}  
Since $G$ acts geometrically on $X$, we also know that $G\times\mathbb
Z$ acts geometrically on $X\times\mathbb R$ just by considering the
obvious product action.  We can apply \cite[Theorem 12]{Swenson} to
conclude that $\bdry X$ has finite Lebesgue covering dimension, call
that dimension $d$.  Using this $d$, a consequence of the main theorem
of \cite{GO} is that since $G$ acts geometrically on $X$, $H^{d+1}(G,
\mathbb ZG)\ne 0$, while $H^n(G,\mathbb ZG)=0$ for all $n > d+1$.  For
all $k$ we know $H^{k+1}(G\times\mathbb Z, \mathbb Z(G\times\mathbb
Z))\cong H^k(G,\mathbb ZG)$ for all $k$.  Thus $H^{d+2}(G\times\mathbb
Z, \mathbb Z(G\times\mathbb Z))$ is non-zero while $H^{d+2}(G,\mathbb
ZG)= 0$.  Finally, since $H^*(G,\mathbb ZG)$ is a q.-i. invariant,
\cite{Gersten}, $G$ and $G\times\mathbb Z$ cannot be quasi-isometric.
\end{proof}

\begin{thm}\label{main2}  Suppose $G$ acts geometrically on a $\cat(0)$ space $X$ with $\bdry X$ homeomorphic to the suspension of a Cantor set.  Then $G$ contains a subgroup $G_0$ of finite index which is isomorphic to $F\times\mathbb Z$ where $F$ is a nonabelian free group.
\end{thm}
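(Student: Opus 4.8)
The plan is to feed the product structure produced by \fullref{main1} into \fullref{geom} and then invoke Stallings' theorem on ends of groups. By \fullref{main1} there is a closed, convex, $G$--invariant subspace $X'\subseteq X$ which splits isometrically as $X'=Y\times\mathbb R$, where $Y$ is $\cat(0)$ and $\bdry Y$ is homeomorphic to a Cantor set. First I would check that $G$ acts geometrically on $X'$: by \fullref{all} the set $X'$ is quasi-dense in $X$, so the $G$--equivariant nearest-point projection $X\to X'$ is onto and $G$ acts cocompactly on $X'$, while properness is inherited from $X$. Next I would observe that $Y$ has no $\mathbb R$--factor, for otherwise $\bdry Y$ would be a suspension, hence connected, contradicting that a Cantor set (with more than one point) is totally disconnected; consequently $\mathbb R$ is the maximal Euclidean de Rham factor of $Y\times\mathbb R$, every isometry of $Y\times\mathbb R$ splits as a product, and we are exactly in the setting of \fullref{algebra}/\fullref{geom} \emph{once} one knows that $\Isom(Y)$ has discrete orbits.

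Verifying that $\Isom(Y)$ has discrete orbits is the step I expect to be the main obstacle, and it is where one must use that $\bdry Y$ is $0$--dimensional. Since $\bdry Y$ is totally disconnected, $Y$ contains no isometrically embedded flat plane (such a plane would contribute a simple closed curve to $\bdry Y$, by the first lemma in the section on geometric results) and, more generally, $Y$ has no Euclidean or symmetric-space factor; I would then deduce from the structure of $\cat(0)$ spaces with cocompact isometry group (note $\Isom(Y)$ acts cocompactly on $Y$, since $G$ does, compatibly with the splitting) that the identity component of the locally compact group $\Isom(Y)$ is trivial, so $\Isom(Y)$ is totally disconnected and acts on $Y$ with discrete orbits. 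An alternative that bypasses \fullref{geom} is to extract, by the method of \fullref{key}, an infinite-order $g\in G$ with $\Min(g)=X'$: because $X'=Y\times\mathbb R$ has no $\mathbb R$--factor beyond the displayed one, such a $g$ is forced to translate purely along the $\mathbb R$--direction, so $\langle g\rangle$ is normal in $G$, the centralizer $C(g)$ has index at most $2$ in $G$ and acts geometrically on $X'=\Min(g)$, and \fullref{minset}(4) then yields a geometric action of $C(g)/\langle g\rangle$ on $Y$. Either way the technical content is the promotion of the cocompact action on $Y\times\mathbb R$ to a genuinely \emph{proper} action on the $Y$--factor.

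Once a group $K$ acting geometrically on $Y$ is in hand — either $K=C(g)/\langle g\rangle$, or the factor $K$ of \fullref{geom}, which comes packaged with a finite-index subgroup $K\times\mathbb Z\le G$ — the rest is classical. As $Y$ has no flat plane, \fullref{flatpl} shows $K$ is word hyperbolic with Gromov boundary $\bdry Y$, a Cantor set, so $K$ has infinitely many ends. By Stallings' theorem and Dunwoody's accessibility, $K$ is the fundamental group of a finite graph of groups with finite edge groups and vertex groups that are finite or one-ended; each vertex group is quasi-convex, hence hyperbolic with totally disconnected limit set, and a one-ended hyperbolic group has connected boundary with more than one point, so the one-ended case cannot occur. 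Thus all vertex groups are finite, $K$ is virtually free, and since $\bdry K$ is infinite, $K$ is not virtually cyclic, so $K$ contains a finite-index nonabelian free subgroup $F$. Then $F\times\mathbb Z\le K\times\mathbb Z\le G$ has finite index in $G$ and is of the required form; in the alternative route one instead splits the central extension $1\to\langle g\rangle\to C(g)_0\to F\to 1$ (which splits because $F$ is free and $\langle g\rangle$ is central) to obtain $C(g)_0\cong F\times\mathbb Z$ of finite index in $G$. To summarize: the hard part is the discreteness of the $\Isom(Y)$--orbits, and everything downstream is standard ends-of-groups theory.
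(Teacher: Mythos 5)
Your main line is the paper's own proof: feed \fullref{main1} into \fullref{geom} to obtain, up to finite index, $K\times\mathbb Z\le G$ with $K$ acting geometrically on $Y$; rule out flats in $Y$ (the paper uses discreteness of $\bdryT Y$ from \fullref{main1}(1), you use total disconnectedness of $\bdry Y$ --- either works via the lemma that a flat contributes an embedded circle to the visual boundary); conclude $K$ is word hyperbolic with Cantor set boundary, hence virtually free and not virtually cyclic, giving $F\times\mathbb Z$ of finite index. Your preliminary remark that $G$ acts geometrically on $X'=Y\times\mathbb R$ (quasi-density plus the projection) is a point the paper passes over silently, and is fine.

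Two caveats about the material you add. First, the hypothesis you isolate --- that $\Isom(Y)$ has discrete orbits --- is indeed demanded by \fullref{geom} (the paper invokes \fullref{geom} without comment), but what you offer is a sketch, not a proof: both ``no Euclidean or symmetric factor implies the identity component of $\Isom(Y)$ is trivial'' and ``totally disconnected implies discrete orbits'' are substantial structure theorems about isometry groups of proper cocompact $\cat(0)$ spaces, not deducible from anything in this paper, and you give no argument for either. Second, your ``alternative route'' does not work as stated. \fullref{key} is a proof by contradiction carried out under the assumption $Td(N,S)<\pi$; in the actual situation it does not produce an infinite-order $g$ with $\Min(g)=X'$. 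Even an element whose axis has endpoints $N$ and $S$ only splits as $g=(g_Y,t_g)$ with $g_Y$ elliptic, so $\Min(g)=\Fix(g_Y)\times\mathbb R$, and the absence of a further $\mathbb R$--factor of $Y$ in no way forces $g_Y$ to be trivial. Producing (up to finite index) an infinite-order element that translates purely along the $\mathbb R$--direction is exactly the content of Bridson's \fullref{algebra}, so this bypass is circular; the route through \fullref{geom} is the one to keep.
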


\begin{proof} From \fullref{main1}, we know $X$ splits isometrically as $Y\times\mathbb R$ for some closed convex subset $Y$ of $X$ with $\bdry Y$ homeomorphic to a Cantor set and $\bdryT Y$ discrete.  By \fullref{geom}, we know that up to finite index $G$ splits as $K\times\mathbb Z$ where $K$ acts geometrically on $Y$.  Thus $Y$ is a proper, cocompact $\cat(0)$ space so we can apply the Flat Plane Theorem to $Y$.  If there were a flat plane in $Y$, then the Tits metric on $Y$ would not be discrete, and thus $Y$ must be $\delta$--negatively curved.   Since $K$ acts geometrically on $Y$, $K$ is a $\delta$--hyperbolic group.  

But now $K$ is a $\delta$--hyperbolic group acting geometrically on a
space $Y$ with Cantor set boundary.  Thus $K$ is virtually free by
work of Stallings, Gromov, Ghys and de la Harpe
\cite{Stallings,Gromov,GH}.
\end{proof}

\bibliographystyle{gtart}
\bibliography{link}

\end{document}